\documentclass[10pt]{amsart}

\usepackage{amsmath, amsthm, amssymb, amsfonts, verbatim}

\usepackage[active]{srcltx}
\usepackage{array}
\usepackage[bookmarks]{hyperref}
\usepackage{amscd}
\usepackage{epsfig}
\usepackage{color}
\usepackage{comment}
\usepackage{setspace}
\usepackage{textcomp}
\usepackage{url}
\usepackage{multirow}
\usepackage{stmaryrd}
\usepackage{pdfsync}
\usepackage{breqn}

\numberwithin{equation}{section}

\parindent=0.5cm




\newcommand{\pmat}[1]{\begin{pmatrix} #1 \end{pmatrix}}

\newcommand{\beq} {\begin{equation}}
\newcommand{\eeq} {\end{equation}}
\newcommand{\bdm} {\begin{displaymath}}
\newcommand{\edm} {\end{displaymath}}

\newcommand{\bit}{\begin{itemize}}
\newcommand{\eit}{\end{itemize}}
\newcommand{\bde}{\begin{description}}
\newcommand{\ede}{\end{description}}

\newcommand{\ben}{\begin{enumerate}}
\newcommand{\een}{\end{enumerate}}
\newcommand{\algn}[1]{\begin{align} #1 \end{align}}
\newcommand{\algns}[1]{\begin{align*} #1 \end{align*}}
\newcommand{\mltln}[1]{\begin{multline} #1 \end{multline}}
\newcommand{\mltlns}[1]{\begin{multline*} #1 \end{multline*}}

\newcommand{\gats}[1]{\begin{gather*} #1 \end{gather*}}
\newcommand{\barr}{\begin{array}}
\newcommand{\earr}{\end{array}}

\newcommand{\mc}[1]{\mathcal{#1}}
\newcommand{\LRp}[1]{\left( #1 \right)}

\newcommand{\ra}{\rightarrow}

\newcommand{\R}{{\mathbb R}}

\newcommand{\grad}{\operatorname{grad}}

\renewcommand{\div}{\operatorname{div}}

\newcommand{\esssup}{\operatorname{esssup}}

\newcommand{\bs}{\boldsymbol}

\newcommand{\pd}{\partial}


\newtheorem{theorem}{Theorem}[section]
\newtheorem{prop}[theorem]{Proposition} 
\newtheorem{lemma}[theorem]{Lemma}


\newcommand{\fb}{\bs{f}}

\newcommand{\ub}{\bs{u}}

\newcommand{\vb}{\bs{v}}
\newcommand{\Vb}{\bs{V}}

\newcommand{\Wb}{\bs{W}}

\newcommand{\wb}{\bs{w}}

\newtheorem{definition}[theorem]{Definition}

\begin{document}

\title[Mixed method for metamaterials ]{A mixed method for time-transient acoustic wave propagation in metamaterials}

\author{Jeonghun J. Lee} 
\address{Department of Mathematics, Baylor University, Waco, TX , USA}
\email{jeonghun\_lee@baylor.edu}
\urladdr{}
\subjclass[2000]{Primary: 65N15, 65N30}
\keywords{mixed method, wave propagation, metamaterial}
\date{June, 2, 2020}

\maketitle

\begin{abstract}
In this paper we develop a finite element method for acoustic wave propagation in Drude-type metamaterials. 
The governing equation is written as a symmetrizable hyperbolic system with auxiliary variables. 
The standard mixed finite elements and discontinuous finite elements are used for spatial discretization, and the Crank--Nicolson scheme is used for time discretization. The a priori error analysis of fully discrete scheme is carried out in details. 
Numerical experiments illustrating the theoretical results and metamaterial wave propagation, are included.
\end{abstract}

\section{Introduction}
Metamaterials usually mean the materials with artificial micro/nano-scale structures which show unconventional macro-scale material properties which are not observed in natural materials. The unconventional material properties of metamaterials have many potential applications in wave propagation. For example, cloaking devices, which hide internal objects from external detection using wave refection, can be made by an appropriate design of metamaterial device. Therefore devising metamaterials and its numerical simulations are research topics of great interest nowadays.

There are three major classes of metamaterials, which are for acoustic, electromagnetic, and elastodynamic wave propagation. 
In this paper we only consider acoustic wave propagation in metamaterials. In time-harmonic cases some of these wave propagation equations coincide under special circumstances but they are all different in time transient wave propagation. For the theory of electromagnetic metamaterials and time-domain finite element methods we refer to, e.g., \cite{Hunag-Li-Yang:2013,Li-book-metamaterial:2013,Li-Wood:2007,Yang-Huang-Li:2016} and the references in \cite{Li:2016} for more comprehensive list of previous studies. There are also previous studies on elastodynamic metamaterials in, e.g., \cite{Milton-Seppecher:2008,Milton-Willis:2007,Norris-Shuvalov:2011}. 

To the best of our knowledge there are very limited number of previous studies on numerical methods for time transient acoustic wave propagation in metamaterials. In \cite{Bellis-Lombard:2019} some acoustic metamaterial models, the acoustic counterpart of doubly negative index materials in electromagnetics \cite{Li-Wood:2007,Yang-Huang-Li:2016}, are studied. In the paper the authors proposed a form of symmetrizable hyperbolic system as the governing equations of acoustic wave propagation in metamaterials. 
In addition, they proved existence of weak solutions and showed numerical experiments with the finite difference method. 

In this paper we develop a finite element method for the system proposed in \cite{Bellis-Lombard:2019} and prove the a priori error analysis. For spatial discretization we use the mixed finite element for the Poisson equation and some discontinuous finite element spaces. To circumvent lower convergence rate of the pressure variable in some mixed finite element pairs, we propose a novel local post-processing which gives numerical pressure with higher order approximation properties (See Subsection~3.3).

The paper is organized as follows. In Section~\ref{sec:prelim} we first introduce symbols and notation in the paper, and then present the governing equations for the acoustic wave propagation in metamaterials as well as the energy estimate. In Section~\ref{sec:error-analysis} we introduce finite element discretization for the system and prove the a priori error analysis. In particular, we show that a local post-processing for the pressure variable can be used to obtain a numerical pressure which has better approximation property than the original numerical pressure. In Section~\ref{sec:numerical} we present the results of numerical experiments which illustrate our theoretical results and exotic wave propagation in metamaterials.

\section{Preliminaries}\label{sec:prelim}

\subsection{Notation} \label{subsec:notation}

Let $\Omega \subset \mathbb{R}^d$, $d =2$ or $3$, be a polygonal/polyhedral domain with Lipschitz boundary. 
Throughout this paper we assume that $\mathcal{T}_h$ is a triangulation of $\Omega$ without hanging nodes.

We use $L^r(\Omega)$ to denote the Lebesgue space with the norm  
\algns{
  \| v \|_{L^r} =
  \begin{cases}
    \left( \int_{\Omega} |v|^r \, dx \right)^{1/r}, & \text{ if }1 \leq r < \infty, \\
    \esssup_{x \in \Omega} \{ |v(x)| \}, & \text{ if } r = \infty.
  \end{cases}
}
For a domain $D \subset \Omega$, $L^2(D)$ and $L^2(D;\R^d)$ be the sets of $\R$- and $\R^d$-valued square integrable functions with inner products $\LRp{v, v'}_D := \int_{D} v v' \,d x$ and
$\LRp{\vb, \vb'}_D := \int_{D} \vb \cdot \vb' \,d x$. We will use $\LRp{\cdot, \cdot}$ 
instead of $\LRp{\cdot, \cdot}_D$ if $D = \Omega$. For an integer $l \ge 0$ $\mc{P}_l(D)$ and $\mc{P}_l(D;\R^d)$
are the spaces of $\R$- and $\R^d$-valued polynomials of degree $\le l$ on $D$. 

In the paper $H^s(D)$, $s \ge 0$, denotes the Sobolev space based
on the $L^2$-norm with $s$-differentiability on the domain $D$. We refer to \cite{Evans-book} for a rigorous definition of this
space. The norm on $H^s(D)$ is denoted by $\| \cdot \|_{s,D}$ and $D$ is omitted if $D = \Omega$. 
If $\rho$ is a nonnegative function in $L^\infty(\Omega)$, then $\| v \|_{\rho}$ and $\| \vb \|_{\rho}$ denotes the $\rho$-weighted $L^2$-norms $\LRp{\int_{\Omega} \rho |v|^2 \,dx}^{1/2}$ and $\LRp{\int_{\Omega} \rho \vb \cdot \vb \,dx}^{1/2}$.

For $T >0$ and a separable Hilbert space $X$, let $C^0 ([0, T] ; X)$ denote the set
of functions $f : [0, T] \rightarrow X$ that are continuous
in $t \in [0, T]$. For an integer $m \geq 1$, we define
\begin{equation*}
  C^m ([0, T]; {X}) = \{ f \, | \, \partial^i f/\partial t^i \in C^0([0, T];X), \, 0 \leq i \leq m \},
\end{equation*}
where $\partial^i f/\partial t^i$ is the $i$-th time derivative in the
sense of the Fr\'echet derivative in ${X}$ (cf.~\cite{Yosida-book}).
For a function $f : [0, T] \rightarrow {X}$, the Bochner norm is defined as 
\algns{
  \| f \|_{L^r(0, T; {X})} =
  \begin{cases}
    \left( \int_0^T \| f(s) \|_{{X}}^r ds \right)^{1/r}, \quad 1 \leq r < \infty, \\
    \esssup_{t \in (0, T)} \| f (t) \|_{X}, \quad r = \infty.
  \end{cases}
}
We define $W^{k,r}(0, T; {X})$ for a non-negative integer $k$ and $1 \leq r \leq \infty$ as the
closure of $C^k ([0, T]; {X})$ with the norm $\| f \|_{W^{k,r}(0, T;{X})} = \sum_{i=0}^k \| \partial^i f/\partial t^i \|_{L^r(0, T; {X})}$. The semi-norm $\| f \|_{\dot{W}^{k,r}(0, T;{X})}$ is defined by $\| f \|_{\dot{W}^{k,r}(0, T;{X})} = \| \partial^k f/\partial t^k \|_{L^r(0, T; {X})}$.

Finally, for a normed space $X$ with its norm $\| \cdot \|_X$ and functions $f_1, f_2 \in X$, $\| f_1, f_2 \|_X$ will be used to denote $\| f_1 \|_X + \| f_2 \|_X$, and $\| f_1, f_2, f_3 \|_X$ is defined similarly. 

\subsection{A metamaterial model of acoustic wave propagation} \label{subsec:model}
A system of equations for the acoustic wave propagation with velocity and pressure unknowns is 
\algns{
  \rho \frac{\pd \vb}{\pd t} + \grad p &= \bs{f} , \\
  \kappa^{-1} \frac{\pd p}{\pd t} + \div \vb &= g
}
with the density $\rho$ and the bulk modulus $\kappa$. In conventional material models
the coefficients $\rho$ and $\kappa$ are fixed uniformly positive functions in $\Omega$. In this paper we are interested in metamaterial models such that $\rho$ and $\kappa^{-1}$ are frequency-dependent, more precisely, the temporal Fourier transform of the equations with frequency $\omega$ satisfy
\algns{
  -i \omega \hat{\rho} (\omega) \hat{\vb}(\omega)  + \grad \hat{p}(\omega)  = \hat{\bs{f}}(\omega)  , \\
  -i \omega \hat{\kappa}^{-1}(\omega)  \hat{p}(\omega)  + \div \hat{\vb}(\omega)  = \hat{g}(\omega) 
}
with 
\algns{
\hat{\rho}(\omega) &= \rho_a \LRp{ 1 - \frac{\Omega_{\rho}^2}{\omega^2 - \omega_{\rho}^2 }} \qquad 
\hat{\kappa}^{-1}(\omega) &= \kappa_a^{-1} \LRp{ 1 - \frac{\Omega_{\kappa}^2}{\omega^2 - \omega_{\kappa}^2 + i \gamma \omega }} , \quad \gamma \ge 0
}
where $\rho_a, \kappa_a >0$ are functions in $\Omega$ with uniform positive lower bounds, $\Omega_{\rho} \ge 0$, $\Omega_{\kappa} \ge 0$ are functions in $\Omega$, $\omega_{\rho} >0$, $\omega_{\kappa}>0$, $\gamma \ge 0$ are constants in $\Omega$, and $\hat{\vb}$, $\hat{p}$, $\hat{\bs{f}}$, $\hat{g}$ are the temporal Fourier transforms of $\vb$, $p$, $\bs{f}$, $g$, respectively.

To obtain a system of time-dependent equations we introduce new variables $\ub$, $\wb$, $q$, $r$ satisfying
\algns{
  i\omega \hat{\vb} (\omega) &= (\omega^2 - \omega_{\rho}^2) \hat{\ub} (\omega) , & -i \omega \hat{\wb} (\omega) &= \hat{\ub} (\omega) , \\
  i\omega \hat{p} (\omega) &= (\omega^2 + i \gamma \omega - \omega_{\kappa}^2) \hat{q} (\omega) , & -i\omega \hat{r} (\omega) &= \hat{q} (\omega) .
}
The system of time-dependent equations are 
\begin{subequations} \label{eq:original-eqs}
  \algn{
    \label{eq:original-eq-1}  \rho_a \frac{\pd \vb}{\pd t} + \grad p + \rho_a \Omega_{\rho}^2 \ub &= \fb , \\
	\label{eq:original-eq-2}  \kappa_a^{-1} \frac{\pd p}{\pd t} + \div \vb + \kappa_a^{-1} \Omega_{\kappa}^2 q &= g ,\\
	\frac{\pd \ub}{\pd t} - \vb + \omega_{\rho}^2 \wb &= 0, \\
	\frac{\pd \wb}{\pd t} - \ub &= 0, \\
	\frac{\pd q}{\pd t} - p + \gamma q + \omega_{\kappa}^2 r &= 0, \\
	\frac{\pd r}{\pd t} - q &= 0 .
  }
\end{subequations}
We assume that the material of wave propagation in $\Omega$ consists of a conventional positive index material (PIM) on a subdomain $\Omega_P \subset \Omega$ and a negative index material (NIM) on $\Omega \setminus \overline{\Omega_P}$. The PIM and NIM materials are mathematically modeled by the values of $\Omega_{\rho}$ and $\Omega_{\kappa}$, i.e., $\Omega_{\rho} = \Omega_{\kappa} = 0$ on $\Omega_P$ and $\Omega_{\rho}, \Omega_{\kappa} >0 $ on $\Omega \setminus \overline{\Omega_P}$. Note that the first two equations in \eqref{eq:original-eqs} are decoupled from the other equations on the domain $\Omega_P$ because $\Omega_{\rho} = \Omega_{\kappa} = 0$ on $\Omega_P$. From this observation we may develop numerical methods which solve different sets of equations on the PIM and NIM domains. However, we will focus on a monolithic numerical method for the system because monolithic approaches can cover problems with varying interfaces between PIM and NIM in a unified manner. They can be used for shape optimization problems for metamaterial device design, which is our future research interest.

For boundary conditions of \eqref{eq:original-eqs} let $\Gamma_D$, $\Gamma_N$ be the subsets of $\pd \Omega$ such that $\Gamma_D \cap \Gamma_N = \emptyset$, $\overline{\Gamma_D} \cup \overline{\Gamma_N} = \pd \Omega$. Then imposed boundary conditions are 
\algn{ \label{eq:general-bc}
	p (t) = p_D (t) \quad \text{ on } \Gamma_D, \qquad \vb(t) \cdot \bs{n} = v_N (t) \quad \text{ on } \Gamma_N
}
with given functions $p_D$ on $(0,T]\times \Gamma_D$ and $v_N$ on $(0,T]\times \Gamma_N$, where $\bs{n}$ is the unit outward normal vector field on $\Gamma_N$. 

To write a variational form of the system let us define function spaces
\algns{
  \Wb = L^2(\Omega; \R^d), \qquad Q = L^2(\Omega), \qquad \Vb = \{ \vb \in \Wb \,:\, \div \vb \in L^2(\Omega) \} 
}
where $\div \vb$ is defined in the sense of distributions. 
For future reference we define $\mathcal{X} := \Vb \times Q \times \Wb \times \Wb \times Q \times Q$ with the norm induced by the $L^2$ norms of the function spaces. We also define $\rho_u$, $\rho_w$, $\rho_q$, $\rho_r$ to denote (nonnegative) weights 
\algns{
  \rho_u = \rho_{a}\Omega_{\rho}^2, \quad \rho_w = \rho_{a}\omega_{\rho}^2 \Omega_{\rho}^2, \quad 
  \rho_q = \kappa_a^{-1} \Omega_{\kappa}^2, \quad \rho_r = \kappa_a^{-1} \omega_{\kappa}^2 \Omega_{\kappa}^2
}
in the rest of this paper.

For well-posedness of \eqref{eq:original-eqs} we recall the following result from \cite[Theorem~3.1]{Bellis-Lombard:2019}.
\begin{theorem} \label{thm:well-posed}
  For \eqref{eq:original-eqs} suppose that initial data $(\vb(0), p(0), \ub(0), \wb(0), q(0), r(0)) \in \mc{X}$ satisfy $\vb(0), \ub(0), \wb(0) \in H^1(\Omega; \R^d)$, $p(0), q(0), r(0) \in H^1(\Omega)$. In addition, suppose that $\fb \in C^1([0,T]; \Wb)$, $g \in C^1([0,T]; Q)$ hold. Then there exists a unique solution 
\algn{ \label{eq:solution-regularity}
  (\vb, p, \ub, \wb, q, r) \in C^1([0,T]; \mc{X}) \cap C^0([0,T]; \mc{X})
}
for the given initial data and $\fb$, $g$. 
\end{theorem}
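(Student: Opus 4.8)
The plan is to recast \eqref{eq:original-eqs}--\eqref{eq:general-bc} as a first-order abstract Cauchy problem on the Hilbert space $\mc{X}$ and run it through $C_0$-semigroup theory. After a routine lifting of the boundary data one may assume the conditions in \eqref{eq:general-bc} are homogeneous, the lift being absorbed into the right-hand side (its compatibility with the initial data is exactly what the $H^1$-regularity hypothesis supplies). Collecting the unknowns as $U = (\vb, p, \ub, \wb, q, r)$ and dividing the first two equations by $\rho_a$ and $\kappa_a^{-1}$, the system becomes $\dot U = \mc{L} U + \widetilde F$, where, using the uniform positivity and boundedness of $\rho_a,\kappa_a$, I would equip $\mc{X}$ with the equivalent inner product
\begin{equation*}
  \LRa{U, U'} = \LRp{\rho_a \vb, \vb'} + \LRp{\kappa_a^{-1}p, p'} + \LRp{\ub, \ub'} + \LRp{\wb, \wb'} + \LRp{q, q'} + \LRp{r, r'},
\end{equation*}
and note that $\widetilde F = (\rho_a^{-1}\fb,\ \kappa_a g,\ 0,0,0,0) \in C^1([0,T];\mc{X})$ since $\fb, g \in C^1([0,T])$.

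Next I would split $\mc{L} = \mc{L}_0 + \mc{B}$, isolating in $\mc{L}_0 U = (-\rho_a^{-1}\grad p,\ -\kappa_a\div\vb,\ 0,0,0,0)$ the only genuinely unbounded piece, on the domain $D(\mc{L}_0) = \{ U \in \mc{X} : p \in H^1(\Omega),\ p|_{\Gamma_D} = 0,\ \vb\cdot\bn|_{\Gamma_N} = 0 \}$, while $\mc{B}$ gathers all the zeroth-order coupling and damping terms ($\Omega_\rho^2\ub$, $\Omega_\kappa^2 q$, $\vb - \omega_\rho^2 \wb$, $\ub$, $p - \gamma q - \omega_\kappa^2 r$, $q$). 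Because $\rho_a, \kappa_a, \Omega_\rho, \Omega_\kappa, \omega_\rho, \omega_\kappa, \gamma$ are all bounded, $\mc{B}$ is a bounded operator on $\mc{X}$ -- and this holds irrespective of the fact that the weights $\rho_u, \rho_w, \rho_q, \rho_r$ vanish on the PIM region $\Omega_P$. Integrating $\LRa{\mc{L}_0 U, U'}$ by parts and using the homogeneous boundary conditions gives $\LRa{\mc{L}_0 U, U'} = -\LRa{U, \mc{L}_0 U'}$, so $\mc{L}_0$ is skew-symmetric; its maximality -- surjectivity of $I \pm \mc{L}_0$ -- reduces to solving a resolvent equation for $(\vb, p)$ which is precisely the mixed (dual) formulation of a coercive scalar second-order elliptic problem with mixed Dirichlet--Neumann data, and is classical (Brezzi--Fortin). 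Hence $\mc{L}_0$ is skew-adjoint and, by Stone's theorem, generates a unitary $C_0$-group on $\mc{X}$.

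By the bounded perturbation theorem, $\mc{L} = \mc{L}_0 + \mc{B}$ then generates a $C_0$-group on $\mc{X}$ with $D(\mc{L}) = D(\mc{L}_0)$. The hypotheses $\vb(0) \in H^1(\Omega;\R^d)$, $p(0) \in H^1(\Omega)$ (with the boundary compatibility) place $U(0) \in D(\mc{L})$, so the standard theory of the inhomogeneous Cauchy problem with $\widetilde F \in C^1([0,T];\mc{X})$ yields a unique solution $U \in C^1([0,T];\mc{X}) \cap C^0([0,T];D(\mc{L}))$; in particular \eqref{eq:solution-regularity} holds. Uniqueness also follows directly from an energy identity: testing the homogeneous system against $\vb$, $p$ and against $\rho_u \ub$, $\rho_w \wb$, $\rho_q q$, $\rho_r r$ makes every cross term cancel and renders the $\gamma q$-term dissipative, after which Gr\"onwall -- supplemented by the linear ODE estimate for $(\ub, \wb, q, r)$ on $\Omega_P$ -- forces the difference to vanish.

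I expect the main obstacle to be exactly this degeneracy of the natural energy weights on $\Omega_P$: a naive ``symmetrize everything and estimate'' argument controls the auxiliary variables only on $\Omega \setminus \overline{\Omega_P}$, so one must either route the couplings through the bounded-perturbation splitting (where the degeneracy does no harm) or patch the missing bound in via the decoupled ODE system that $(\ub, \wb, q, r)$ satisfies on $\Omega_P$. A secondary technical point is the verification of the range condition for $\mc{L}_0$ when $\Gamma_D$ and $\Gamma_N$ are both nonempty, which is where the classical mixed elliptic theory is needed. A Faedo--Galerkin construction followed by the same energy estimate is an equally viable alternative route.
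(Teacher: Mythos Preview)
The paper does not prove this theorem: it is stated without proof and attributed to \cite[Theorem~3.1]{Bellis-Lombard:2019}. There is therefore no ``paper's own proof'' to compare against, only the cited reference.

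Your semigroup strategy---isolating the skew-adjoint $(\grad,\div)$ block, invoking Stone's theorem after checking maximality via the mixed elliptic resolvent, and then absorbing the zeroth-order couplings as a bounded perturbation---is the standard route for symmetrizable first-order hyperbolic systems of this kind and is sound. Your observation that the degeneracy of the weights $\rho_u,\rho_w,\rho_q,\rho_r$ on $\Omega_P$ is harmless once the couplings are placed in the bounded perturbation $\mc{B}$ (rather than in the energy inner product) is exactly the right way to sidestep the issue. One small point to tidy up: the space $\mc{X}$ in the paper already carries $\Vb = H(\div)$ in its first slot, whereas the natural Hilbert space for the Stone--perturbation argument is the pure $L^2$ product; your conclusion $U \in C^0([0,T];D(\mc{L}))$ then recovers the $H(\div)$ regularity of $\vb$ and the $H^1$ regularity of $p$ a posteriori, so the statement \eqref{eq:solution-regularity} follows, but you should make that identification explicit.
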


For finite element discretization we need to consider a variational form of \eqref{eq:original-eqs}. For simplicity of presentation we assume the homogeneous boundary condition
\algn{ \label{eq:homog-bc}
  p(t) = 0 \quad \text{ on } \pd \Omega 
}
for all $t \in (0,T]$. 

For simplicity of presentation we will use $\dot{v}$ instead of $\partial v / \partial t$ in the rest of paper. 
\begin{definition}
 For $\fb \in L^1((0,T); \Wb)$, $g \in L^1((0,T); Q)$, we say $(\vb, p, \ub, \wb, q, r) \in H^1([0,T], \mathcal{X}) \cap L^2((0,T); \mathcal{X})$ a weak solution of \eqref{eq:original-eqs} if it satisfies 
\begin{subequations} \label{eq:wave-cont1}
  \algn{
    \label{eq:wave-cont1a} \LRp{\rho_{a} \dot \vb , \vb'} - \LRp{ p, \div \vb'} + \LRp{ \rho_{u} \ub, \vb'} &= \LRp{ \fb , \vb'}, \\
    \LRp{ \kappa_a^{-1} \dot p, p'} + \LRp{ \div \vb, p'} + \LRp{\rho_q q, p'} &= \LRp{ g, p'} ,\\
    \LRp{  \dot \ub , \ub'} - \LRp{ \vb, \ub'} + \LRp{ \omega_{\rho}^2 \wb, \ub'} &= 0, \\
    \LRp{ \dot \wb , \wb'} - \LRp{ \ub, \wb'} &= 0, \\
    \LRp{ \dot q, q'} - \LRp{ p, q'} + \LRp{ \gamma  q, q'} + \LRp{\omega_{\kappa}^2 r , q'} &= 0, \\
    \LRp{ \dot r, r'} - \LRp{  q, r'} &= 0 
  }
\end{subequations}
for $(\vb', p', \ub', \wb', q', r') \in \mathcal{X}$ and for almost every $t \in (0,T)$.
\end{definition}
One can easily check by the integration by parts that the solution in Theorem~\ref{thm:well-posed} with the boundary condition \eqref{eq:homog-bc} is a weak solution satisfying \eqref{eq:wave-cont1}. 

We remark that the above variational form can cover general boundary conditions with some necessary modifications. For the boundary condition \eqref{eq:general-bc} we replace $\Vb$ by 
\algns{
\Vb_N = \{ \vb \in \Wb \,:\, \div \vb \in L^2(\Omega), \vb \cdot \bs{n} = v_N \text{ on } \Gamma_N \}
}
and replace \eqref{eq:wave-cont1a} by 
\algns{
\LRp{\rho_{a} \dot \vb , \vb'} - \LRp{ p, \div \vb'} + \LRp{ \rho_{u} \ub, \vb'} &= \LRp{ \fb , \vb'}  - \int_{\Gamma_D} p_D \vb' \cdot \bs{n} \,ds 
}
for the test function $\vb'$ in $\Vb_N^0 := \{ \vb \in \Wb \,:\, \div \vb \in L^2(\Omega), \vb \cdot \bs{n} = 0 \text{ on } \Gamma_N \}$.

\begin{theorem}
  If $(\vb, p, \ub, \wb, q, r)$ is a solution of \eqref{eq:original-eqs} satisfying \eqref{eq:solution-regularity}, then 
  \mltln{ \label{eq:integral-ineq-1}
  \| \vb, p, \ub, \wb, q, r \|_{L^\infty((0,T); \mathcal{X})} \\
  \le C_1 \| \vb(0), p(0), \ub(0), \wb(0), q(0), r(0) \|_{\mathcal{X}} + C_2 \| \fb, g \|_{L^1((0,T); \Wb \times Q)} 
  }
  holds with $C_2$ which may depend on $T$. Moreover, if we define
  \algn{ \label{eq:E0-def}
    E_0(t)^2 = \| \ub(t) \|_{\rho_u}^2 +  \| \vb(t) \|_{\rho_a}^2 +  \| \wb(t) \|_{\rho_w}^2 +  \| p(t) \|_{\kappa_a^{-1}}^2 +  \| q(t) \|_{\rho_q}^2 +  \| r(t) \|_{\rho_r}^2
  }
  with the weighted (semi)-norm $\| \cdot \|_{\rho}$, $\rho = \rho_u, \rho_a, \rho_w, \kappa_a^{-1}, \rho_q, \rho_r$,
  then 
  \algn{ \label{eq:integral-ineq-2}
    E_0(t) \le E_0(0) + C \int_0^t \LRp{ \| \fb(s) \|_0 + \| g(s) \|_0 } \,ds .
  }
  with $C>0$ depending only on $\| \rho_a^{-1} \|_{L^\infty}$ and $\| \kappa_a \|_{L^\infty}$.
\end{theorem}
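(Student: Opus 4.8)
The plan is to run the standard energy method on the weak form \eqref{eq:wave-cont1} (which the stated solution satisfies by the remark following its definition), choosing the test functions so that the skew couplings cancel in pairs, and then to integrate in time.

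\emph{Step 1: the sharp estimate \eqref{eq:integral-ineq-2}.} First I would take $\vb' = \vb$ in \eqref{eq:wave-cont1a}, $p' = p$ in the second equation of \eqref{eq:wave-cont1}, $\ub' = \rho_u \ub$ in the third, $\wb' = \rho_w \wb$ in the fourth, $q' = \rho_q q$ in the fifth, and $r' = \rho_r r$ in the sixth, and sum the six resulting identities. The integration by parts is already built into \eqref{eq:wave-cont1a}, so the terms $-(p,\div\vb)$ and $(\div\vb,p)$ cancel. Using the algebraic relations $\rho_w = \omega_{\rho}^2\rho_u$ and $\rho_r = \omega_{\kappa}^2\rho_q$, immediate from the definitions $\rho_u = \rho_a\Omega_{\rho}^2$, $\rho_w = \rho_a\omega_{\rho}^2\Omega_{\rho}^2$, $\rho_q = \kappa_a^{-1}\Omega_{\kappa}^2$, $\rho_r = \kappa_a^{-1}\omega_{\kappa}^2\Omega_{\kappa}^2$, one checks that every remaining coupling term appears twice with opposite signs: $(\rho_u\ub,\vb)$ against $-(\rho_u\vb,\ub)$ (first and third equations), $(\rho_w\wb,\ub)$ against $-(\rho_w\ub,\wb)$ (third and fourth), $(\rho_q q,p)$ against $-(\rho_q p,q)$ (second and fifth), and $(\rho_r r,q)$ against $-(\rho_r q,r)$ (fifth and sixth). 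Since the weights are independent of $t$, what survives on the left is exactly $\tfrac{1}{2}\tfrac{d}{dt}E_0(t)^2 + \gamma\|q(t)\|_{\rho_q}^2$ with $E_0$ as in \eqref{eq:E0-def}, and $\gamma\|q\|_{\rho_q}^2\ge 0$ since $\gamma\ge 0$; the right-hand side is $(\fb,\vb)+(g,p)$. By Cauchy--Schwarz together with $\|\vb\|_0\le \|\rho_a^{-1}\|_{L^\infty}^{1/2}\|\vb\|_{\rho_a}\le \|\rho_a^{-1}\|_{L^\infty}^{1/2}E_0$ and $\|p\|_0\le \|\kappa_a\|_{L^\infty}^{1/2}\|p\|_{\kappa_a^{-1}}\le \|\kappa_a\|_{L^\infty}^{1/2}E_0$, this right-hand side is bounded by $C(\|\fb(t)\|_0+\|g(t)\|_0)E_0(t)$ with $C$ depending only on $\|\rho_a^{-1}\|_{L^\infty}$ and $\|\kappa_a\|_{L^\infty}$. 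Dividing by $E_0(t)$ where it is positive (and treating its zero set by replacing $E_0$ with $\sqrt{E_0^2+\epsilon}$ and letting $\epsilon\downarrow 0$) gives $\tfrac{d}{dt}E_0\le C(\|\fb\|_0+\|g\|_0)$, and integration on $[0,t]$ yields \eqref{eq:integral-ineq-2}.

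\emph{Step 2: the $L^\infty$ bound \eqref{eq:integral-ineq-1}.} Because $\rho_a$ and $\kappa_a^{-1}$ are uniformly positive, \eqref{eq:integral-ineq-2} already controls $\sup_{[0,T]}\|\vb(t)\|_0$ and $\sup_{[0,T]}\|p(t)\|_0$ by the right-hand side of \eqref{eq:integral-ineq-1} (here one also uses $\rho_u,\rho_w,\rho_q,\rho_r\in L^\infty$ to bound $E_0(0)$ by the $\mathcal{X}$-norm of the initial data). The weights $\rho_u,\rho_w,\rho_q,\rho_r$ vanish on $\Omega_P$, so $E_0$ does not control $\ub,\wb,q,r$ there; but testing the third and fourth equations of \eqref{eq:wave-cont1} with $\ub' = \ub$, $\wb' = \omega_{\rho}^2\wb$ and the fifth and sixth with $q' = q$, $r' = \omega_{\kappa}^2 r$, and using again that the $\ub$--$\wb$ and $q$--$r$ couplings cancel, gives $\tfrac{1}{2}\tfrac{d}{dt}(\|\ub\|_0^2+\omega_{\rho}^2\|\wb\|_0^2) = (\vb,\ub)$ and $\tfrac{1}{2}\tfrac{d}{dt}(\|q\|_0^2+\omega_{\kappa}^2\|r\|_0^2)+\gamma\|q\|_0^2 = (p,q)$. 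Dropping the nonnegative $\gamma$-term and applying Cauchy--Schwarz, $\tfrac{d}{dt}(\|\ub\|_0^2+\omega_{\rho}^2\|\wb\|_0^2)^{1/2}\le \|\vb\|_0$ and likewise for $(q,r)$; integrating and inserting the $L^\infty$ bound on $\|\vb\|_0,\|p\|_0$ from Step~1 (which produces an extra factor $1+T$) controls $\|\ub(t)\|_0,\|\wb(t)\|_0,\|q(t)\|_0,\|r(t)\|_0$ uniformly on $[0,T]$. Assembling these with the bounds on $\|\vb\|_0,\|p\|_0$ and the equivalence between the $\mathcal{X}$-norm and $(\|\vb\|_0^2+\|p\|_0^2+\|\ub\|_0^2+\|\wb\|_0^2+\|q\|_0^2+\|r\|_0^2)^{1/2}$ gives \eqref{eq:integral-ineq-1}, with the $T$-dependence confined to $C_2$ (and the harmless $1+T$ factor).

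The routine parts are the Cauchy--Schwarz steps and the bookkeeping of norm-equivalence constants. The one point requiring care is the cancellation pattern in Step~1: it works precisely because each auxiliary equation is paired with the weight that makes its coupling term coincide with the one already produced by an earlier equation, which is exactly why $\rho_u,\rho_w,\rho_q,\rho_r$ are defined as they are. I expect this to be the only place where signs must be tracked carefully, and foresee no genuine obstacle beyond it.
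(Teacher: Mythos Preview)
Your proof is correct. For \eqref{eq:integral-ineq-2} you do essentially what the paper does: take the weighted tests $(\vb,p,\rho_u\ub,\rho_w\wb,\rho_q q,\rho_r r)$ so that all coupling terms cancel, arriving at $\tfrac12\tfrac{d}{dt}E_0^2+(\gamma\rho_q q,q)=(\fb,\vb)+(g,p)$; the only difference is the endgame, where you use the $\varepsilon$-regularization trick while the paper argues via the sup of $E_0$ on $[0,t]$.

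For \eqref{eq:integral-ineq-1} the routes diverge. The paper takes \emph{unweighted} tests $(\vb,p,\ub,\wb,q,r)$, which leaves nonvanishing cross terms like $((\rho_u-1)\ub,\vb)$ and then closes by Gronwall on the unweighted energy $E_1^2=\|\ub\|_0^2+\|\vb\|_{\rho_a}^2+\|\wb\|_0^2+\|p\|_{\kappa_a^{-1}}^2+\|q\|_0^2+\|r\|_0^2$. You instead reverse the order: first prove \eqref{eq:integral-ineq-2}, use it to control $\|\vb\|_0$ and $\|p\|_0$, and then bootstrap the auxiliary variables from the last four equations with tests $(\ub,\omega_\rho^2\wb,q,\omega_\kappa^2 r)$. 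Your route avoids Gronwall entirely and gives a $C_2$ that grows only like $1+T$, whereas the paper's Gronwall step produces an exponential-in-$T$ constant; on the other hand the paper's argument for \eqref{eq:integral-ineq-1} is self-contained and does not rely on \eqref{eq:integral-ineq-2}. Both are valid.
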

\begin{proof}
Recall that $(\vb, p, \ub, \wb, q, r)$ satisfies \eqref{eq:wave-cont1}. 
If we choose $(\vb', p', \ub', \wb', q', r') = (\vb, p, \ub, \wb, q, r)$ in \eqref{eq:wave-cont1} and add all the equations, then we get 
\algn{ \label{eq:energy-eq1}
&\frac 12 \frac{d}{dt} \LRp{ \| \ub \|_{0}^2 +  \| \vb \|_{\rho_a}^2 +  \| \wb \|_{0}^2 +  \| p \|_{\kappa_a^{-1}}^2 +  \| q \|_{0}^2 +  \| r \|_{0}^2 } + \LRp{ \gamma q, q} \\
\notag &+ \LRp{ (\rho_{u} -1 )\ub, \vb} + \LRp{ (\rho_q -1 ) q, p} \\
\notag &+ \LRp{ (\omega_{\rho}^2 -1) \wb, \ub } + \LRp{ (\omega_{\kappa}^2 - 1) r, q} = \LRp{ \fb, \vb} + \LRp{  g, p} .
}
Let $E_1(t)^2 = \| \ub(t) \|_{0}^2 +  \| \vb(t) \|_{\rho_a}^2 +  \| \wb(t) \|_{0}^2 +  \| p(t) \|_{\kappa_a^{-1}}^2 +  \| q(t) \|_{0}^2 +  \| r(t) \|_{0}^2$. 
The Cauchy--Schwarz inequality with the above identity gives 
\algns{
\frac{d}{dt} E_1(t)^2 \le C E_1(t)^2 + (\| \fb (t) \|_0 + \| g (t) \|_0) E_1(t)
}
with $C$ depending on $\rho_u$, $\rho_q$, $\omega_{\rho}$, $\omega_{\kappa}$. 
By Gronwall lemma one can obtain 
\algns{
  E_1(t) \le E_1(0) + C (t) \int_0^t (\| \fb(s) \|_0 + \| g(s) \|_0 ) \,ds .
}
Then \eqref{eq:integral-ineq-1} follows from the equivalence of 
$$\sqrt{E_1(t)} \text{ and } \|(\ub(t), \vb(t), \wb(t), p(t), q(t), r(t)) \|_{\mc{X}}.$$

To prove \eqref{eq:integral-ineq-2} we choose $(\vb', p', \ub', \wb', q', r') = (\vb, p, \rho_u \ub, \rho_w  \wb, \rho_q q, \rho_r r)$ in \eqref{eq:wave-cont1} and add all the equations. Then we get 
\algns{
  \frac 12 \frac{d}{dt} E_0(t)^2 + \LRp{ \gamma \rho_q q, q}  = \LRp{ \fb, \vb} + \LRp{ g, p} .
}

If $E_0(t) \le E_0(0)$, then there is nothing to prove, so we assume $E_0(t) > E_0(0)$ and will prove \eqref{eq:integral-ineq-2} in the rest of the proof.

First, we prove \eqref{eq:integral-ineq-2} assuming that $E_0(t) = \esssup_{s \in [0,t]} E_0(s)$. 
Since $\LRp{ \gamma \rho_q q, q} \ge 0$, integration of the above identity from $0$ to $t$ gives 
\algns{
  E_0(t)^2 - E_0(0)^2 &\le 2 \max \{\| \rho_a^{-1} \|_{L^\infty}, \| \kappa_a \|_{L^\infty} \} \int_0^t \LRp{ \| \fb(s) \|_0 + \| g (s) \|_0 } E_0(s) \,ds \\
  &\le 2 \max \{\| \rho_a^{-1} \|_{L^\infty}, \| \kappa_a \|_{L^\infty} \} \int_0^t \LRp{ \| \fb(s) \|_0 + \| g (s) \|_0 } \,ds E_0(t) .
}
Then 
\algn{ 
  \notag E_0(t) &\le \frac{E_0(0)^2}{E_0(t)} + 2 \max \{\| \rho_a^{-1} \|_{L^\infty}, \| \kappa_a \|_{L^\infty} \} \int_0^t \LRp{ \| \fb(s) \|_0 + \| g \|_0 } \,ds  \\
  \label{eq:E0-estm-1} &\le  E_0(0) + 2 \max \{\| \rho_a^{-1} \|_{L^\infty}, \| \kappa_a \|_{L^\infty} \} \int_0^t \LRp{ \| \fb(s) \|_0 + \| g \|_0 } \,ds 
}
which proves \eqref{eq:integral-ineq-2}.

If $0<E_0(t) < \esssup_{s \in [0,t]} E_0(s)$, then there exists $0 \le t_0 <t$ such that $E_0(t_0) = \esssup_{s \in [0,t_0]} E_0(s)$ and $E_0(t) < E_0(t_0)$. By the same argument as above, we can obtain the inequality \eqref{eq:E0-estm-1} for $E_0(t_0)$. Then 
\algns{
  E_0(t) &< E_0(t_0) \le E_0(0) + 2 \max \{\| \rho_a^{-1} \|_{L^\infty}, \| \kappa_a \|_{L^\infty} \} \int_0^{t_0} \LRp{ \| \fb(s) \|_0 + \| g (s) \|_0 } \,ds \\
  &\le E_0(0) + 2 \max \{\| \rho_a^{-1} \|_{L^\infty}, \| \kappa_a \|_{L^\infty} \} \int_0^{t} \LRp{ \| \fb(s) \|_0 + \| g (s) \|_0 } \,ds ,
}
so \eqref{eq:integral-ineq-2} is proved.
\end{proof}
By observing \eqref{eq:original-eq-1} and \eqref{eq:original-eq-2}, the auxiliary variables $(\ub, \wb, q, r)$ interact with $(\vb, p)$ only on the NIM domain on which $\Omega_{\rho}$ and $\Omega_{\kappa}$ are strictly positive. In fact, the physical meaning of $(\ub, \wb, q, r)$ on $\Omega_P$ is not clear, so there is no natural way to determine the initial data of $(\ub, \wb, q, r)$ on $\Omega_P$.
In the following theorem we show that $(\vb, p)$ in \eqref{eq:wave-cont1} is independent on the initial data of $(\ub, \wb, q, r)$ on $\Omega_P$.
As a consequence, any choice of initial data $(\ub, \wb, q, r)$ on $\Omega_P$ is allowed to obtain a unique $(\vb, p)$. 
This argument can be extended to our numerical scheme, so there is no concern in the choice of numerical initial data of $(\ub, \wb, q, r)$ on $\Omega_P$.
%
\begin{theorem}
  Given $\fb \in L^1((0,T); \Wb)$, $g \in L^1((0,T); Q)$, and initial data ${\bf U}(0) \in \mathcal{X}$, \eqref{eq:original-eqs} has a unique weak solution. In addition, suppose that ${\bf U}_i \in H^1([0,T]; \mathcal{X}) \cap L^2((0,T); \mathcal{X})$, $i=1,2$ are the weak solutions for the two sets of initial data ${\bf U}_i(0) \in \mathcal{X}$, $i=1,2$. For ${\bf U}_i(t):=(\vb_i(t), p_i(t), \ub_i(t), \wb_i(t), q_i(t), r_i(t))$ with $i=1,2$, if 
\algn{
\label{eq:initial-id-1} &\vb_1(0) = \vb_2(0), \quad p_1(0) = p_2(0) & & \text{ on }\Omega, \\
\label{eq:initial-id-2} &\ub_1(0) = \ub_2(0), \wb_1(0) = \wb_2(0), q_1(0) = q_2(0), r_1(0) = r_2(0) & & \text{ on }\Omega \setminus \overline{\Omega_P},  
}
then the same identities hold for the weak solutions ${\bf U}_1(t)$, ${\bf U}_2(t)$ for $t \in(0,T]$.
\end{theorem}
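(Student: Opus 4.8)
The plan is to exploit linearity: the difference $\mathbf{U} := \mathbf{U}_1 - \mathbf{U}_2$, with components $(\vb, p, \ub, \wb, q, r)$ (reusing symbols for the differences), satisfies the homogeneous system \eqref{eq:wave-cont1} with $\fb = 0$, $g = 0$, because the system is linear in the unknowns and the coefficients $\rho_a$, $\kappa_a^{-1}$, $\rho_u$, $\rho_q$, $\omega_\rho^2$, $\omega_\kappa^2$, $\gamma$ are fixed data. By hypothesis \eqref{eq:initial-id-1}--\eqref{eq:initial-id-2}, the initial data of $\mathbf{U}$ vanishes everywhere except possibly for $(\ub, \wb, q, r)$ on $\Omega_P$; uniqueness of the weak solution (the first assertion, which follows from Theorem~\ref{thm:well-posed} together with the reduction argument for boundary conditions) then reduces the claim to showing that $\vb \equiv 0$, $p \equiv 0$ on all of $\Omega$ and $\ub \equiv 0$, $\wb \equiv 0$, $q \equiv 0$, $r \equiv 0$ on $\Omega \setminus \overline{\Omega_P}$.

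The key observation is that on $\Omega_P$ we have $\Omega_\rho = \Omega_\kappa = 0$, hence $\rho_u = \rho_q = 0$ there, so the weights $\rho_u$ and $\rho_q$ appearing in front of $\ub$ and $q$ in \eqref{eq:wave-cont1a} and the second equation are supported in $\Omega \setminus \overline{\Omega_P}$. This means the coupling terms $(\rho_u \ub, \vb')$ and $(\rho_q q, p')$ see only the restrictions of $\ub$ and $q$ to the NIM region. The strategy is therefore to run the energy argument from the previous theorem, but with the \emph{weighted} test functions $(\vb', p', \ub', \wb', q', r') = (\vb, p, \rho_u \ub, \rho_w \wb, \rho_q q, \rho_r r)$, exactly as in the derivation of \eqref{eq:integral-ineq-2}. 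Since $\fb = g = 0$, this yields
\begin{equation*}
  \frac12 \frac{d}{dt} E_0(t)^2 + (\gamma \rho_q q, q) = 0,
\end{equation*}
and because $(\gamma \rho_q q, q) \ge 0$ we get $E_0(t) \le E_0(0)$. The crucial point is that $E_0(t)^2 = \|\ub\|_{\rho_u}^2 + \|\vb\|_{\rho_a}^2 + \|\wb\|_{\rho_w}^2 + \|p\|_{\kappa_a^{-1}}^2 + \|q\|_{\rho_q}^2 + \|r\|_{\rho_r}^2$ involves the weights $\rho_u, \rho_w, \rho_q, \rho_r$, all of which vanish on $\Omega_P$; hence $E_0(0)^2$ depends on $\ub(0), \wb(0), q(0), r(0)$ only through their values on $\Omega \setminus \overline{\Omega_P}$, where they vanish by \eqref{eq:initial-id-2}. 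Also $\vb(0) = 0$, $p(0) = 0$. Therefore $E_0(0) = 0$, so $E_0(t) \equiv 0$, which forces $\vb(t) \equiv 0$ and $p(t) \equiv 0$ on $\Omega$ (since $\rho_a, \kappa_a^{-1}$ are uniformly positive) and $\ub(t), \wb(t), q(t), r(t)$ to vanish on $\Omega \setminus \overline{\Omega_P}$ (since $\rho_u, \rho_w, \rho_q, \rho_r$ are strictly positive there, using $\Omega_\rho, \Omega_\kappa, \omega_\rho, \omega_\kappa > 0$ on the NIM region).

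The main obstacle — really the only subtle point — is justifying that $E_0$ is genuinely differentiable in $t$ and that the energy identity holds in the required pointwise-a.e. sense for weak solutions in $H^1([0,T];\mathcal{X})$, rather than only for the smoother solutions of Theorem~\ref{thm:well-posed}; this is the same regularity technicality already handled implicitly in the proof of the preceding theorem, so I would invoke it by reference (choosing the weighted test functions is legitimate since $\rho_u, \rho_w, \rho_q, \rho_r \in L^\infty(\Omega)$, so the weighted fields remain in $\mathcal{X}$). A secondary bookkeeping point is the first assertion of the theorem, existence and uniqueness of a weak solution for $L^1$-in-time data and $\mathcal{X}$-initial data: uniqueness follows from the energy estimate \eqref{eq:integral-ineq-1} applied to the difference of two solutions, and existence can be obtained by a density argument approximating $(\fb, g)$ and the initial data by the smoother data covered in Theorem~\ref{thm:well-posed} and passing to the limit using \eqref{eq:integral-ineq-1} for stability. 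Once these are in place, the conclusion for $\mathbf{U}_1 - \mathbf{U}_2$ is immediate from $E_0 \equiv 0$.
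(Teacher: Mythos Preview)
Your proposal is correct and follows essentially the same argument as the paper: take the difference $\mathbf{U}_1-\mathbf{U}_2$, apply the weighted energy estimate \eqref{eq:integral-ineq-2} with $\fb=\bs{0}$, $g=0$, observe that $E_0(0)=0$ because the weights $\rho_u,\rho_w,\rho_q,\rho_r$ vanish on $\Omega_P$, and conclude $E_0\equiv 0$. Your write-up is in fact more careful than the paper's own proof, which does not address existence for $L^1$-in-time data at all (only uniqueness via \eqref{eq:integral-ineq-1}) and tacitly assumes the weighted energy identity extends to weak solutions; your remarks on the density argument and on the regularity needed for $\tfrac{d}{dt}E_0^2$ are appropriate caveats.
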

\begin{proof}
  If ${\bf U}_i$, $i=1,2$ are weak solutions for given $\fb$, $g$, and initial data ${\bf U}(0) \in \mathcal{X}$. Then ${\bf U}_1 - {\bf U}_2$ is a weak solution for $\fb = \bs{0}$, $g=0$, and zero initial data. Then ${\bf U}_1 = {\bf U}_2$ follows by \eqref{eq:integral-ineq-1}. 
  
To prove the second part of the assertion, let ${\bf U}_i$, $i=1,2$ be the weak solutions for the initial data 
\algns{
(\vb_i(0), p_i(0), \ub_i(0), \wb_i(0), q_i(0), r_i(0)), \quad i=1,2
}
satisfying \eqref{eq:initial-id-1} and \eqref{eq:initial-id-2}. Let $(\vb, p, \ub, \wb, q, r)$ be the difference ${\bf U}_1 - {\bf U}_2$ and define $E_0(t)$ as in \eqref{eq:E0-def}. Then $E_0(t)$ satisfies \eqref{eq:integral-ineq-2} with $\fb = \bs{0}$ and $g=0$. Moreover, $E_0(0) = 0$ because of \eqref{eq:initial-id-1}, \eqref{eq:initial-id-2}, so $E_0(t) = 0$ holds for all $t \in (0,T]$ and the assertion follows. 
\end{proof}

\section{Finite elements discretization and error analysis } \label{sec:error-analysis}
   
\subsection{Finite elements for spatial discretization } \label{subsec:discretization}

Recall that $\mc{T}_h$ is a triangulation of $\Omega$ without hanging nodes. In the rest of this paper we assume that the density functions $\rho_{\sigma}$ with $\sigma = u, w, q, r$ are in $W_h^{1,\infty}(\mc{T}_h)$ where 
\algns{
W_h^{1,\infty}(\mc{T}_h) := \{ \rho \in L^2(\Omega) \,:\, \rho|_K \in L^{\infty}(K), \; \grad (\rho|_K) \in L^{\infty}(K; \R^d)  \quad \forall K \in \mc{T}_h \} 
}
with the norm $\| \rho \|_{W_h^{1,\infty}} := \sup_{K \in \mc{T}_h} (\| \rho|_K \|_{L^{\infty}(K)} + \| \grad (\rho|_K) \|_{L^{\infty}(K)})$.

Finite element discretization of the first order differential equation form of acoustic wave equations, is studied in \cite{Geveci:1988}. We extend the approach in \cite{Geveci:1988} to include the auxiliary variables.
For discretization of \eqref{eq:wave-cont1} with finite elements we use finite element spaces
$\Vb_h \subset \Vb$, $\Wb_h \subset \Wb$, $Q_h \subset Q$ which are defined below. First,
${\rm BDM}_l(K)$ for $l \ge 1$ and ${\rm RTN}_l (K)$ for $l \ge 0$ are defined by
\algns{
 {\rm BDM}_l (K) = \{ v \in \mc{P}_{l}(K; \R^d) \},  \quad {\rm RTN}_{l} (K) = \{ v \in \mc{P}_{l}(K; \R^d) + \bs{x} \mc{P}_{l}(K) \} 
}
where $\bs{x} = (x_1, \ldots, x_d)^T$.

In the rest of this paper $k\ge 0$ is a fixed integer. For given $k\ge 0$ we set $\bs{S}_k(K)$ as either ${\rm BDM}_{k+1} (K)$ or ${\rm RTN}_{k} (K)$, and define $\Vb_h$ as the finite element space
\algn{ \label{eq:Vh}
  \Vb_{h} &= \{ \vb \in \Vb \,:\, \vb|_K \in \bs{S}_k (K), \quad K \in \mc{T}_h \}. 
}
By this definition $\Vb_h$ is the Brezzi--Douglas--Marini or the N\'{e}d\'{e}lec element of the second kind if $\bs{S}_k(K)= {\rm BDM}_{k+1}(K)$ \cite{BDM85,Nedelec86}, and is the Raviart--Thomas or the N\'{e}d\'{e}lec element of the first kind if $\bs{S}_k(K)={\rm RTN}_k(K)$ \cite{Nedelec80,RT75}. For the details on the definition of $\Vb_h$ with $\bs{S}_k(K)= {\rm BDM}_{k+1} (K)$ or $\bs{S}_k(K) = {\rm RTN}_{k} (K)$, we refer to \cite{Brezzi-Fortin-book,Boffi-Brezzi-Fortin-book} and the original articles \cite{BDM85,Nedelec80,Nedelec86,RT75}. For $\Wb_h$, let $\Wb_h(K)$ be 
\algns{
	\Wb_h(K) = 
	\begin{cases}
		\mc{P}_{k+1}(K;\R^d)  & \text{ if } \bs{S}_k(K) = {\rm BDM}_{k+1} (K), \\
		\mc{P}_k(K;\R^d) & \text{ if } \bs{S}_k(K) = {\rm RTN}_{k} (K) ,
	\end{cases}
}
and define $\Wb_h$, $Q_h$ as 
\algn{
  \label{eq:Wh} \Wb_h &= \{ \vb \in L^2(\Omega; \R^d) \,:\, \vb|_K \in \Wb_h(K) \}, \\
  \label{eq:Qh} Q_h &= \{ q \in L^2(\Omega)\,:\, q|_K \in \mc{P}_k(K) \} .
}

Let $\mc{X}_h$ be $\Vb_h \times Q_h \times \Wb_h \times \Wb_h \times Q_h \times Q_h$. 
For $\fb \in C^0([0,T]; \Wb)$, $g \in C^0([0,T]; Q)$ the semidiscrete problem is to 
find $(\vb_h, p_h, \ub_h, \wb_h, q_h, r_h) \in C^1((0,T], \mc{X}_h)$ which satisfies 
\begin{subequations} \label{eq:wave-semidiscrete}
  \algn{
    \LRp{\rho_{a} \dot \vb_h , \vb'} - \LRp{ p_h, \div \vb'} + \LRp{ \rho_{u} \ub_h, \vb'} &= \LRp{ \fb , \vb'}, \\
    \LRp{ \kappa_a^{-1} \dot p_h, p'} + \LRp{ \div \vb_h, p'} + \LRp{\rho_q q_h, p'} &= \LRp{ g, p'} ,\\
    \LRp{  \dot \ub_h , \ub'} - \LRp{ \vb_h, \ub'} + \LRp{ \omega_{\rho}^2 \wb_h, \ub'} &= 0, \\
    \LRp{ \dot \wb_h , \wb'} - \LRp{ \ub_h, \wb'} &= 0, \\
    \LRp{ \dot q_h, q'} - \LRp{ p_h, q'} + \LRp{ \gamma  q_h, q'} + \LRp{\omega_{\kappa}^2 r_h , q'} &= 0, \\
    \LRp{ \dot r_h, r'} - \LRp{  q_h, r'} &= 0 
  }
\end{subequations}
for $(\vb', p', \ub', \wb', q', r') \in \mathcal{X}_h$ and for all $t \in (0,T]$. 

We will not discuss an error analysis for semidiscrete solutions in the paper. 
Instead, we will show a detailed error analysis of fully discrete solutions in the subsection below.

\subsection{Error analysis of fully discrete solutions} \label{subsec:error-analysis}

In this subsection we consider fully discrete solutions of \eqref{eq:wave-semidiscrete} with the Crank--Nicolson scheme and show the a priori error estimates.

For $T>0$ let $\Delta t = T/N$ for a natural number $N$ and define $\{t_n\}_{n=0}^N$ by $t_n = n \Delta t$. 
For a variable $\sigma:[0,T] \ra X$ for a Hilbert space $X$, we will use $\sigma_h^n$ and $\sigma^n$ for the numerical solution of $\sigma$ at $t_n$ and $\sigma(t_n)$, respectively. The variable $\sigma$ can be $\ub, \vb, \wb, p, q, r$ in the problem. As such, $\fb^n$ and $g^n$ will denote $\fb(t_n)$ and $g(t_n)$ for a time-dependent functions $\fb \in C^0([0,T]; \Wb)$ and $g\in C^0([0,T]; Q)$. For simplicity we will also use the definitions
\algns{
  \bar{\pd}_t v^{n+\frac 12} := \frac 1{\Delta t} \LRp{ v^{n+1} - v^n } , \qquad v^{n+\frac 12} := \frac 12 \LRp{ v^n + v^{n+1} } 
}
for any sequence $\{v^n\}_{n=0}^N$ with the upper index $n$. 

The Crank--Nicolson scheme of \eqref{eq:wave-cont1} is the following: For given 
\algns{
{\bf U}^n :=(\vb_h^n, p_h^n,\ub_h^n, \wb_h^n, q_h^n, r_h^n), \quad  \fb^n, \quad \fb^{n+1}, \quad g^n, \quad g^{n+1}, 
}
we find ${\bf U}_h^{n+1} := (\vb_h^{n+1}, p_h^{n+1}, \ub_h^{n+1}, \wb_h^{n+1}, q_h^{n+1}, r_h^{n+1}) \in \mc{X}_h$ such that 
\begin{subequations} \label{eq:wave-full-disc}
  \algn{
    \LRp{\rho_{a} \bar{\pd}_t \vb_h^{n+\frac 12} , \vb'} - \LRp{ p_h^{n+\frac 12}, \div \vb'} + \LRp{ \rho_{u} \ub_h^{n+\frac 12}, \vb'} &= \LRp{ \fb^{n+\frac 12} , \vb'}, \\
    \LRp{ \kappa_a^{-1} \bar{\pd}_t p_h^{n+\frac 12}, p'} + \LRp{ \div \vb_h^{n+\frac 12}, p'} + \LRp{\rho_q q_h^{n+\frac 12}, p'} &= \LRp{ g^{n+\frac 12}, p'} ,\\
    \LRp{ \bar{\pd}_t \ub_h^{n+\frac 12} , \ub'} - \LRp{ \vb_h^{n+\frac 12}, \ub'} + \LRp{\omega_{\rho}^2 \wb_h^{n+\frac 12}, \ub'} &= 0, \\
    \LRp{ \bar{\pd}_t \wb_h^{n+\frac 12} , \wb'} - \LRp{ \ub_h^{n+\frac 12}, \wb'} &= 0, \\
    \LRp{ \bar{\pd}_t q_h^{n+\frac 12}, q'} - \LRp{ p_h^{n+\frac 12}, q'} + \LRp{ \gamma q_h^{n+\frac 12}, q'} + \LRp{\omega_{\kappa}^2 r_h^{n+\frac 12}, q'} &= 0, \\
    \LRp{ \bar{\pd}_t r_h^{n+\frac 12}, r'} - \LRp{ q_h^{n+\frac 12}, r'} &= 0 
  }
\end{subequations}
for all $(\vb', p', \ub', \wb', q', r') \in \mc{X}_h$.

For the well-definedness of this fully discrete scheme, we show that ${\bf U}_h^{n+1} = \bs{0}$ if 
\algn{ \label{eq:vanish-assumption}
{\bf U}_h^n = \bs{0}, \qquad \fb^n = \fb^{n+1} = \bs{0}, \qquad g^n = g^{n+1} = 0 .
}
To show it, assume that \eqref{eq:vanish-assumption} is true. Then \eqref{eq:wave-full-disc} becomes 
\begin{subequations} \label{eq:wave-well-posed-eqs}
\algn{ 
    \label{eq:wave-well-posed-eq-1} \frac 1{\Delta t} \LRp{\rho_{a} \vb_h^{n+1} , \vb'} - \frac 12 \LRp{ p_h^{n+1}, \div \vb'} + \frac 12 \LRp{ \rho_{u} \ub_h^{n+1}, \vb'} &= 0, \\
    \label{eq:wave-well-posed-eq-2} \frac 1{\Delta t} \LRp{ \kappa_a^{-1} p_h^{n+1}, p'} + \frac 12 \LRp{ \div \vb_h^{n+1}, p'} + \frac 12 \LRp{\rho_q q_h^{n+1}, p'} &= 0 ,\\
    \label{eq:wave-well-posed-eq-3} \frac 1{\Delta t} \LRp{ \ub_h^{n+1} , \ub'} - \frac 12 \LRp{ \vb_h^{n+1}, \ub'} + \frac 12 \LRp{\omega_{\rho}^2 \wb_h^{n+1}, \ub'} &= 0, \\
    \label{eq:wave-well-posed-eq-4} \frac 1{\Delta t} \LRp{ \wb_h^{n+1} , \wb'} - \frac 12 \LRp{ \ub_h^{n+1}, \wb'} &= 0, \\
    \label{eq:wave-well-posed-eq-5} \frac 1{\Delta t} \LRp{ q_h^{n+1}, q'} - \frac 12 \LRp{ p_h^{n+1}, q'} + \frac 12 \LRp{ \gamma q_h^{n+1}, q'} + \frac 12 \LRp{\omega_{\kappa}^2 r_h^{n+1}, q'} &= 0, \\
    \label{eq:wave-well-posed-eq-6} \frac 1{\Delta t} \LRp{ r_h^{n+1}, r'} - \frac 12 \LRp{ q_h^{n+1}, r'} &= 0 .
}
\end{subequations}
Let $P_h$ and $\bs{P}_h$ be the standard $L^2$ projections into $Q_h$ and $\Wb_h$. 
If we take 
\gats{
  \vb' = \vb_h^{n+1}, \qquad p' = p_h^{n+1}, \qquad \ub' = \bs{P}_h (\rho_u \ub_h^{n+1}), \\
  \wb' = \omega_{\rho}^2 \bs{P}_h (\rho_u \wb_h^{n+1}), \qquad q' = P_h(\rho_q q_h^{n+1}), \qquad 
  r' = \omega_{\kappa}^2 P_h (\rho_q r_h^{n+1}) ,
}
in \eqref{eq:wave-well-posed-eq-1} and add all the equations, then we get 
\algns{
& \frac 1{\Delta t} \LRp{ \| \vb_h^{n+1} \|_{\rho_a}^2 + \| p_h^{n+1} \|_{\kappa_a^{-1}}^2 + \| \ub_h^{n+1} \|_{\rho_u}^2 + \| \wb_h^{n+1} \|_{\rho_w}^2 + \| q_h^{n+1} \|_{\rho_q}^2 + \| r_h^{n+1} \|_{\rho_r}^2  } \\
&\quad  + \LRp{ \gamma \omega_{\kappa}^2 \rho_q r_h^{n+1}, r_h^{n+1} }  = 0, 
}
so $\vb_h^{n+1} = \bs{0}$, $p_h^{n+1} = 0$. From these, $\ub_h^{n+1} = \wb_h^{n+1} = \bs{0}$ follows by taking $\ub' = \ub_h^{n+1}$ and $\wb' = \omega_{\rho}^2 \wb_h^{n+1}$ in \eqref{eq:wave-well-posed-eq-3} and \eqref{eq:wave-well-posed-eq-4}, and then by adding them. Finally, $q_h^{n+1} = r_h^{n+1} = 0$ follows by taking $q' = q_h^{n+1}$ and $r' = \omega_{\kappa}^2 r_h^{n+1}$ in \eqref{eq:wave-well-posed-eq-5} and \eqref{eq:wave-well-posed-eq-6}, and then by adding them. Therefore, ${\bf U}_h^{n+1} = {\bf 0}$.

For the error analysis we use $e_{\sigma}^n = \sigma^n - \sigma_h^n$ for the error of variable $\sigma$ ($\sigma = \vb, \ub, \wb, p, q, r$) at $t = t_n$. For error equations we consider the difference of the average of \eqref{eq:wave-cont1} at $t_n$ and $t_{n+1}$, and the fully discrete scheme \eqref{eq:wave-full-disc}. Then the error equations are 
\begin{subequations} 
  \algns{
    \LRp{\rho_{a} (\dot{\vb}^{n+\frac 12} - \bar{\pd}_t \vb_h^{n+\frac 12} ), \vb'} - \LRp{ e_{p}^{n+\frac 12}, \div \vb'} + \LRp{ \rho_{u} e_{\ub}^{n+\frac 12}, \vb'} &= 0, \\
    \LRp{\kappa_a^{-1} (\dot{p}^{n+\frac 12} - \bar{\pd}_t p_h^{n+\frac 12} ), p'} + \LRp{ \div e_{\vb}^{n+\frac 12}, p'} + \LRp{\rho_q e_{q}^{n+\frac 12}, p'} &= 0 ,\\
    \LRp{\dot{\ub}^{n+\frac 12} - \bar{\pd}_t \ub_h^{n+\frac 12} , \ub'} - \LRp{ e_{\vb}^{n+\frac 12}, \ub'} + \LRp{\omega_{\rho}^2 e_{\wb}^{n+\frac 12}, \ub'} &= 0, \\
    \LRp{\dot{\wb}^{n+\frac 12} - \bar{\pd}_t \wb_h^{n+\frac 12} , \wb'} - \LRp{ e_{\ub}^{n+\frac 12}, \wb'} &= 0, \\
    \LRp{\dot{q}^{n+\frac 12} - \bar{\pd}_t q_h^{n+\frac 12} , q'} - \LRp{ e_{p}^{n+\frac 12}, q'} + \LRp{ \gamma  e_{q}^{n+\frac 12}, q'} + \LRp{\omega_{\kappa}^2 e_{r}^{n+\frac 12}, q'} &= 0, \\
    \LRp{\dot{r}^{n+\frac 12} - \bar{\pd}_t r_h^{n+\frac 12} , r'} - \LRp{ e_{q}^{n+\frac 12}, r'} &= 0 .
  }
\end{subequations}

For $\vb' \in H^s(\Omega; \R^d), s > \frac 12$, we define $\Pi_h$ as the canonical interpolation operators of RTN or BDM element which satisfy
\algn{ \label{eq:commute-interpolation}
\div \Pi_h \vb' = P_h \div \vb', \qquad 
\| \vb' - \Pi_h \vb' \|_0 \le Ch^{m} \| \vb' \|_{m} 
}
with $m:= \max \{s, k+1+\delta \}$ where $\delta = 1$ if $\Vb_h$ is a BDM element and $\delta = 0$ if $\Vb_h$ is an RTN element. 

Using $\Pi_h$, $\bs{P}_h$, $P_h$, we can define the decomposition of errors 
\algn{
  \label{eq:err-decomp-v} e_{\vb}^n &= e_{\vb}^{I,n} + e_{\vb}^{h,n} := (\vb^n - \Pi_h \vb^n) + (\Pi_h \vb^n - \vb_h^n), \\
  \label{eq:err-decomp-u} e_{\ub}^n &= e_{\ub}^{I,n} + e_{\ub}^{h,n} := (\ub^n - \bs{P}_h \ub^n) + (\bs{P}_h \ub^n - \ub_h^n), \\
  \label{eq:err-decomp-w} e_{\wb}^n &= e_{\wb}^{I,n} + e_{\wb}^{h,n} := (\wb^n - \bs{P}_h \wb^n) + (\bs{P}_h \wb^n - \wb_h^n), \\
  \label{eq:err-decomp-p} e_{p}^n &= e_{p}^{I,n} + e_{p}^{h,n} := (p^n - P_h p^n) + (P_h p^n - p_h^n), \\
  \label{eq:err-decomp-q} e_{q}^n &= e_{q}^{I,n} + e_{q}^{h,n} := (q^n - P_h q^n) + (P_h q^n - q_h^n), \\
  \label{eq:err-decomp-r} e_{r}^n &= e_{r}^{I,n} + e_{r}^{h,n} := (r^n - P_h r^n) + (P_h r^n - r_h^n).
}
For estimates of the interpolation errors denoted by $e_{\sigma}^{I,n}$ for a variable $\sigma$, let us use a generic symbol $I_h \sigma^n$ to denote the interpolation of the exact solution $\sigma^n$ into the corresponding finite element space. More specifically, $I_h = \Pi_h$ if $\sigma = \vb$, $I_h = \bs{P}_h$ if $\sigma = \ub, \wb$, and $I_h = P_h$ if $\sigma = p, q, r$. Then it holds that 
\algn{ \label{eq:approx-assumption}
 \| e_{\sigma}^{I,n} \|_0 = \| \sigma^n - I_h \sigma^n \|_0 \le Ch^s \| \sigma^n \|_{s} , 
}
with 
\algn{ 
\label{eq:s-range-1} &\frac 12 < s \le k + 1 + \delta & & \text{ if }\sigma = \vb, \\
\label{eq:s-range-2} &0 \le s \le k + 1 + \delta & & \text{ if }\sigma = \ub, \wb, \\
\label{eq:s-range-3} &0 \le s \le k+1 & &\text{ if }\sigma = p, q, r. 
}

By \eqref{eq:commute-interpolation} we can obtain 
\algns{
\LRp{e_p^{h,n}, \div \vb'} = 0 \quad \forall \vb' \in \Vb_h, \qquad \LRp{ \div e_{\vb}^{h,n}, q'} = 0 \quad \forall q' \in Q_h. 
}
By these identities, the orthogonality of $L^2$ projections, and some algebraic manipulations, the previous error equations are reduced to 
\begin{subequations} \label{eq:full-error-eqs}
  \algn{
    \label{eq:full-error-eq-1}\LRp{\rho_{a} \bar{\pd}_t e_{\vb}^{h,n+\frac 12} , \vb'} - \LRp{ e_{p}^{h,n+\frac 12}, \div \vb'} + \LRp{ \rho_{u} e_{\ub}^{h,n+\frac 12}, \vb'} &= F_v^n (\vb') , \\
    \LRp{ \kappa_a^{-1} \bar{\pd}_t e_{p}^{h,n+\frac 12}, p'} + \LRp{ \div e_{\vb}^{h,n+\frac 12}, p'} + \LRp{\rho_q e_{q}^{h,n+\frac 12}, p'} &= F_p^n (p') ,\\
    \LRp{ \bar{\pd}_t e_{\ub}^{h,n+\frac 12} , \ub'} - \LRp{ e_{\vb}^{h,n+\frac 12}, \ub'} + \LRp{\omega_{\rho}^2 e_{\wb}^{h,n+\frac 12}, \ub'} &= F_u^n(\ub') , \\
    \LRp{ \bar{\pd}_t e_{\wb}^{h,n+\frac 12} , \wb'} - \LRp{ e_{\ub}^{h,n+\frac 12}, \wb'} &= F_w^n (\wb'), \\
    \LRp{ \bar{\pd}_t e_{q}^{h,n+\frac 12}, q'} - \LRp{ e_{p}^{h,n+\frac 12}, q'} + \LRp{ \gamma  e_{q}^{h,n+\frac 12}, q'} + \LRp{\omega_{\kappa}^2 e_{r}^{h,n+\frac 12}, q'} &= F_q^n(q') , \\
    \LRp{ \bar{\pd}_t e_{r}^{h,n+\frac 12}, r'} - \LRp{ e_{q}^{h,n+\frac 12}, r'} &= F_r^n(r') 
  }
\end{subequations}
where 
\begin{subequations} \label{eq:F-terms}
\algn{
	\label{eq:Fv} F_v^n (\vb') &= - \LRp{ \rho_a \LRp{ \Pi_h \bar{\pd}_t \vb^{n+\frac 12} - \dot \vb^{n+\frac 12} } , \vb'} - \LRp{ \rho_u e_{\ub}^{I,n+\frac 12} , \vb'}, \\
	\label{eq:Fp} F_p^n (p') &=  - \LRp{ \kappa_a^{-1} \LRp{ \bar{\pd}_t P_h p^{n+\frac 12} - \dot p^{n+\frac 12} } , p'} - \LRp{ \rho_q e_q^{I,n+\frac 12}  , p'} , \\
	\label{eq:Fu} F_u^n (\ub') &=  - \LRp{  \LRp{ \bar{\pd}_t \bs{P}_h \ub^{n+\frac 12} - \dot \ub^{n+\frac 12} } , \ub'} , \\
	\label{eq:Fw} F_w^n (\wb') &= - \LRp{  \LRp{ \bar{\pd}_t \bs{P}_h \wb^{n+\frac 12} - \dot \wb^{n+\frac 12} } , \wb'} , \\
	\label{eq:Fq} F_q^n (q') &= - \LRp{ \LRp{ \bar{\pd}_t P_h q^{n+\frac 12} - \dot q^{n+\frac 12} }, q'}  , \\
	\label{eq:Fr} F_r^n (r') &= - \LRp{ \LRp{ \bar{\pd}_t P_h r^{n+\frac 12} - \dot r^{n+\frac 12} }, r'} .
}
\end{subequations}
%
In the discussions below we will use $\mc{E}^n$ defined as 
\algn{ \label{eq:En}
(\mc{E}^{n})^2 &= \| e_\ub^{h,n} \|_{\rho_{u}}^2 +  \| e_\vb^{h,n} \|_{\rho_{a}}^2 +  \| e_\wb^{h,n} \|_{\rho_{w}}^2 +  \| e_p^{h,n} \|_{\kappa_a^{-1}}^2 +  \| e_q^{h,n} \|_{\rho_{q}}^2 +  \| e_r^{h,n} \|_{\rho_{r}}^2 .
}
\begin{prop} \label{prop:error-estm}
  For given $\fb \in C^0([0,T]; \Wb)$, $g \in C^0([0,T]; Q)$ and initial data $(\vb(0), \ub(0), \wb(0), p(0), q(0), r(0)) \in \mc{X}$ suppose that $(\vb, \ub, \wb, p, q, r)$ is a weak solution of \eqref{eq:wave-cont1}. Assume that numerical initial data
\algns{
  (\vb_h(0), p_h(0), \ub_h(0), \wb_h(0), q_h(0), r_h(0)) \in \mc{X}_h
}
satisfy 
\algn{ 
  \notag &\| \ub(0) - \ub_h(0) \|_{\rho_{u}} +  \|  \vb(0) - \vb_h(0) \|_{\rho_{a}} +  \|  \wb(0) - \wb_h(0) \|_{\rho_{w}} \\
	\label{eq:initial-assumption}  &\quad +  \| P_h p(0) - p_h(0) \|_{\rho_{p}} +  \| P_h q(0) - q_h(0) \|_{\rho_{q}} +  \| P_h r(0) - r_h(0) \|_{\rho_{r}} \\
  \notag &\qquad \le C_0' h^s , \quad \frac 12 < s \le k+1+\delta 
}
with $C_0'$ independent of $h$. We also assume that the exact solution $(\vb, \ub, \wb, p, q, r)$ and $\rho_a$, $\kappa_a^{-1}$, $\rho_u$, $\rho_w$, $\rho_q$, $\rho_r$ satisfy the regularity assumptions \eqref{eq:C0-depend}, \eqref{eq:C1-depend}, \eqref{eq:C2-depend} below.

If $\{(\vb_h^n, \ub_h^n, \wb_h^n, p_h^n, q_h^n, r_h^n)\}_{n=1}^N$ is a numerical solution obtained by the fully discrete scheme \eqref{eq:wave-full-disc}, then 
\algn{ \label{eq:main-error-estm}
  \mc{E}^n \le C_0 h^s +  C_1(\Delta t)^2 + C_2 h^s , \quad \frac 12 < s \le k+1 +\delta 
}
with constants $C_0$, $C_1$ $C_2$ such that $C_0$ depends on $C_0'$ in \eqref{eq:initial-assumption} and 
\algn{ \label{eq:C0-depend}
\| \rho_u, \rho_a, \rho_w \|_{L^\infty}< \infty, 
}
$C_1$ depends on 
%
\algn{ \label{eq:C1-depend}
\| \vb, \ub, \wb, p, q, r \|_{\dot{W}^{3,1}(0, T; L^2)}, \quad 
\| \rho_a, \rho_u, \rho_w, \rho_q, \rho_r \|_{L^\infty}, 
}
and $C_2$ depends on 
\algn{ 
\label{eq:C2-depend}
\| \dot{\vb} \|_{L^1(0, T; H^s)}, \quad \| \dot{p} \|_{L^1(0, T; H^{s_0})}, \quad 
\| \rho_u, \kappa_a^{-1} \|_{W_h^{1,\infty}}, \quad \| \kappa_a, \rho_a, \rho_w, \rho_q, \rho_r \|_{L^\infty}
}
with $s_0 = \max\{ 0, s-1\}$.
\end{prop}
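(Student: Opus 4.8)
The plan is to mimic the energy argument used for the continuous energy estimate \eqref{eq:integral-ineq-2}, but now applied to the discrete error quantity $\mc{E}^n$ governed by the error equations \eqref{eq:full-error-eqs}. First I would test \eqref{eq:full-error-eq-1} with the weighted choices that diagonalize the coupling terms, mirroring the well-posedness computation: take $\vb' = e_{\vb}^{h,n+\frac12}$, $p' = e_{p}^{h,n+\frac12}$, $\ub' = \bs{P}_h(\rho_u e_{\ub}^{h,n+\frac12})$, $\wb' = \omega_\rho^2 \bs{P}_h(\rho_u e_{\wb}^{h,n+\frac12})$, $q' = P_h(\rho_q e_{q}^{h,n+\frac12})$, $r' = \omega_\kappa^2 P_h(\rho_q e_{r}^{h,n+\frac12})$, and add the six equations. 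The skew-symmetric coupling terms (the $\div$ pairings between $e_\vb$ and $e_p$, and the $e_\vb$--$e_\ub$, $e_\ub$--$e_\wb$, $e_p$--$e_q$, $e_q$--$e_r$ pairings) cancel exactly thanks to the weights, exactly as in the continuous case; the only surviving non-sign-definite term is the dissipative $\gamma$ term which has a good sign. The left-hand side then reduces to $\frac{1}{2}\bar{\pd}_t (\mc{E}^{n+\frac12})^2$ up to the commutator $\bar\pd_t$ versus the weights — here I would use that $\bar\pd_t \LRp{ \|e^{h,n+\frac12}\|_\rho^2 }$ differences telescope: $\LRp{\rho \bar\pd_t e^{h,n+\frac12}, e^{h,n+\frac12}} = \frac{1}{2\Delta t}\LRp{ \|e^{h,n+1}\|_\rho^2 - \|e^{h,n}\|_\rho^2 }$ since $\rho$ is time-independent, giving $\frac12 \bar\pd_t (\mc{E}^n)^2$ on the nose.

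Next I would bound the right-hand side, i.e. the consistency terms $F_\sigma^n(\cdot)$ from \eqref{eq:F-terms} evaluated at the test functions above. These split into two families: (i) the time-discretization defects $\bar\pd_t I_h \sigma^{n+\frac12} - \dot\sigma^{n+\frac12}$, which I would further split as $\bar\pd_t(I_h\sigma^{n+\frac12} - \sigma^{n+\frac12}) + (\bar\pd_t \sigma^{n+\frac12} - \dot\sigma^{n+\frac12})$; the first piece is controlled by $h^s$ times a time-averaged $H^s$ norm of $\dot\sigma$ (using the commuting/projection property and that $\bar\pd_t$ is an average of $\dot{}$), and the second piece is the classical Crank--Nicolson truncation error bounded by $(\Delta t)^2$ times $\|\sigma\|_{\dot W^{3,1}}$ via Taylor expansion with integral remainder; (ii) the spatial projection leftovers $\rho_u e_\ub^{I,n+\frac12}$ and $\rho_q e_q^{I,n+\frac12}$, bounded by $h^s$ using \eqref{eq:approx-assumption}. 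All these are paired against quantities bounded by $\mc{E}^{n+\frac12} \le \frac12(\mc{E}^n + \mc{E}^{n+1})$, or against $L^2$-projections thereof whose norms are controlled by constants depending on $\|\rho\|_{L^\infty}$ and $\|\rho\|_{W_h^{1,\infty}}$ (the latter enters when one needs, e.g., $\|\bs P_h(\rho_u e_\ub^{h})\|_0 \lesssim \|\rho_u\|_{W_h^{1,\infty}} \|e_\ub^h\|_0$ type bounds, or rather $\|\rho_u\|_{L^\infty}$ suffices for the $L^2$ bound — the $W_h^{1,\infty}$ norm is likely needed in the estimate of the term $(\rho_u(\Pi_h\bar\pd_t\vb - \dot\vb), \vb')$ after rewriting, or in handling $\Pi_h$ versus $\bs P_h$ mismatches).

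Then I would collect everything into a discrete Gronwall inequality of the form
\begin{equation*}
\bar\pd_t (\mc{E}^n)^2 \le C \LRp{ (\mc{E}^n)^2 + (\mc{E}^{n+1})^2 } + \LRp{ R_1 (\Delta t)^2 + R_2 h^s } \LRp{ \mc{E}^n + \mc{E}^{n+1} },
\end{equation*}
equivalently $\mc{E}^{n+1} - \mc{E}^n \le C\Delta t (\mc{E}^n + \mc{E}^{n+1}) + \Delta t (R_1(\Delta t)^2 + R_2 h^s)$ after dividing by $\mc{E}^{n+\frac12}$ (using $(\mc{E}^{n+1})^2 - (\mc{E}^n)^2 = (\mc{E}^{n+1}+\mc{E}^n)(\mc{E}^{n+1}-\mc{E}^n)$, and treating the degenerate case $\mc{E}^{n+\frac12}=0$ separately). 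Summing over $n$ and applying the discrete Gronwall lemma (valid for $\Delta t$ small, say $C\Delta t \le 1/2$), together with the initial-data bound \eqref{eq:initial-assumption} which gives $\mc{E}^0 \le C_0' h^s$, yields $\mc{E}^n \le C_0 h^s + C_1(\Delta t)^2 + C_2 h^s$ with the advertised dependence of the constants. The $C_0$ term tracks the initial error, the $C_1$ term the Crank--Nicolson truncation (hence the $\dot W^{3,1}$ regularity), and the $C_2$ term the spatial interpolation/projection errors of $\dot\vb$, $\dot p$ and the weights (hence $\|\dot\vb\|_{L^1(0,T;H^s)}$, $\|\dot p\|_{L^1(0,T;H^{s_0})}$ with $s_0 = \max\{0,s-1\}$, the drop by one reflecting that $p$ only needs $\div$-compatibility).

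The main obstacle I anticipate is the careful treatment of the mixed consistency term $F_v^n$ involving $\Pi_h \bar\pd_t \vb^{n+\frac12} - \dot\vb^{n+\frac12}$: one must commute $\Pi_h$ past the difference quotient, use the commuting diagram $\div\Pi_h = P_h\div$ to kill the troublesome $\div$-pairing against $e_p^{h}$ (which is where $e_p^{h,n+\frac12}\in Q_h^\perp$-orthogonality was already used to derive \eqref{eq:full-error-eqs}), and then estimate the remaining $L^2$ term by $h^s$ — but the interpolation error bound \eqref{eq:commute-interpolation} for $\Pi_h$ requires $s > 1/2$ and measures $\vb$ in $H^m$ with $m = \max\{s, k+1+\delta\}$, so one has to be slightly careful that it is $\|\dot\vb\|_{L^1(0,T;H^s)}$ (not a higher norm) that actually appears, using the time-averaging inside $\bar\pd_t$ to trade a time derivative for the integral. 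The secondary subtlety is making the division by $\mc{E}^{n+\frac12}$ rigorous, handled exactly as in the proof of the continuous estimate \eqref{eq:integral-ineq-2} by the maximum/monotonicity argument or by adding an $\varepsilon$ and passing to the limit.
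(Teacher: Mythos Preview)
Your overall strategy---testing the error equations \eqref{eq:full-error-eqs} with the weighted errors and telescoping---matches the paper's, and the cancellation of the coupling terms is exactly as you describe. However, your treatment of the spatial consistency contributions in $F_p^n$ has a genuine gap that prevents you from reaching the full range $s \le k+1+\delta$ when $\delta=1$ (BDM). The term $(\rho_q e_q^{I,n+\frac12}, e_p^{h,n+\frac12})$ and the spatial part $(\kappa_a^{-1}\bar\pd_t e_p^{I,n+\frac12}, e_p^{h,n+\frac12})$ involve projection errors onto $Q_h = \mc P_k$, for which \eqref{eq:approx-assumption} gives at best $O(h^{k+1})$, not $O(h^{k+2})$. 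The paper recovers the missing power of $h$ by a superconvergence trick: since $e_p^{I}, e_q^{I} \perp Q_h$ and the test function $e_p^{h,n+\frac12}$ lies in $Q_h$, one may subtract the elementwise constant $P_0\kappa_a^{-1}$ (resp.\ $P_0\rho_q$) from the weight for free, producing estimates of the form \eqref{eq:superconvergence-intp1}--\eqref{eq:superconvergence-intp2}. This is precisely where the $W_h^{1,\infty}$ regularity of the coefficients enters---not in the $\Pi_h$ versus $\bs P_h$ mismatch you guessed---and it is what generates the $s_0 = s-1$ shift in the $\dot p$ regularity requirement.

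Two smaller points. First, because the test functions for $\ub,\wb,q,r$ are themselves in $\Wb_h$ or $Q_h$, the $L^2$-projection orthogonality kills the spatial part of $F_u^n, F_w^n, F_q^n, F_r^n$ entirely (e.g.\ $(\bar\pd_t \bs P_h \ub - \dot\ub, \bs P_h(\rho_u e_\ub^h)) = (\bar\pd_t \ub - \dot\ub, \bs P_h(\rho_u e_\ub^h))$), so only the Crank--Nicolson defect survives there; this is why $C_2$ in the statement depends only on $\dot\vb$ and $\dot p$. Second, no Gronwall is needed: since the coupling cancels exactly, the energy identity is $\frac12((\mc E^{n+1})^2 - (\mc E^n)^2) + \Delta t(\gamma\rho_q e_q^{h,n+\frac12}, e_q^{h,n+\frac12}) = R^n$ with $|R^n| \le C_n(\mc E^n + \mc E^{n+1})$, which immediately gives $\mc E^{n+1} - \mc E^n \le 2C_n$ and a clean telescoping sum, with no small-$\Delta t$ restriction and no exponential factor.
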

\begin{proof}
Let us take $(\vb', p', \ub', \wb', q', r') \in \mc{X}_h$ as $$(e_\vb^{h,n+\frac 12}, e_p^{h,n+\frac 12}, \bs{P}_h (\rho_u e_\ub^{h,n+\frac 12}), \bs{P}_h (\rho_w e_\wb^{h,n+\frac 12}), P_h (\rho_q e_q^{h,n+\frac 12}), P_h(\rho_r e_r^{h,n+\frac 12}) )$$ in \eqref{eq:full-error-eqs} and add all the equations, then we can get 
\algn{ 
\label{eq:error-energy-eq} &\frac 12 \LRp{ (\mc{E}^{n+1})^2 - (\mc{E}^{n})^2 } + \Delta t \LRp{ \gamma \rho_q e_q^{h,n+\frac 12} , e_q^{h, n+\frac 12}} \\
\notag &\quad = \Delta t  \LRp{ F_v^n (e_{\vb}^{h, n+\frac 12} ) + F_p^n (e_{p}^{h, n+\frac 12} ) + F_u^n (\bs{P}_h (\rho_u e_\ub^{h,n+\frac 12}) )  } \\
\notag &\qquad + \Delta t \LRp{ F_w^n (\bs{P}_h (\rho_w e_\wb^{h,n+\frac 12}) ) + F_q^n (P_h (\rho_q e_q^{h,n+\frac 12}) ) + F_r^n (P_h(\rho_r e_r^{h,n+\frac 12}) )) } \\
\notag &\quad =: R^n .
}

The proof of \eqref{eq:main-error-estm} has three steps. 
In the first step, we shall prove 
\algn{ 
  \label{eq:Rn-estm} \| R^n \|_0 \le (C_{1,n} (\Delta t)^2 + C_{2,n} h^s ) (\mc{E}^n + \mc{E}^{n+1}) , \qquad \frac 12 < s \le k+1+\delta
}
with $C_{1,n}$ depending on 
\algn{ \label{eq:C1n-depend}
\| \vb, \ub, \wb, p, q, r \|_{\dot{W}^{3,1}(t_n, t_{n+1}; L^2)}, \quad \| \rho_a, \rho_u, \rho_w, \rho_q, \rho_r \|_{L^\infty}, 
}
and with $C_{2,n}$ depending on 
\algn{ 
\begin{split} \label{eq:C2n-depend}
\| \dot{\vb} \|_{L^1(t_n, t_{n+1}; H^s)}, \quad \| \dot{p} \|_{L^1(t_n, t_{n+1}; H^{s_0})}, \\
\| \rho_u, \kappa_a^{-1} \|_{W_h^{1,\infty}}, \quad \| \kappa_a, \rho_a, \rho_w, \rho_q, \rho_r \|_{L^\infty}
\end{split}
}
with $s_0 = \max\{ 0, s-1\}$. The more detailed dependence of $C_{1,n}$, $C_{2,n}$ will be clarified in the proof of \eqref{eq:Rn-estm}. In the second step, we prove 
\algn{ \label{eq:En-estm}
	\mc{E}^{n} \le \mc{E}^0 + 2 \sum_{i=0}^{n-1} (C_{1,i} (\Delta t)^2 + C_{2,i} h^s ) , \qquad \frac 12 < s \le k+1+\delta .
}
In the third step, we prove 
\algn{ \label{eq:E0-estm}
\mc{E}^0 \le C_0 h^s , \qquad \frac 12 < s \le k+1+\delta 
}
with $C_0$ depending on $C_0'$, the shape regularity of $\mc{T}_h$, and $\| \rho_u, \rho_a, \rho_w \|_{L^\infty}< \infty$.

Note that the conclusion \eqref{eq:main-error-estm} follows from \eqref{eq:error-energy-eq}, \eqref{eq:Rn-estm}, \eqref{eq:En-estm}, \eqref{eq:E0-estm} by taking $C_1 = \sum_{0\le i\le n} {C_{1,i}}$, $C_2 = \sum_{0\le i\le n} {C_{2,i}}$. Therefore we will devote the rest of proof to prove \eqref{eq:Rn-estm}, \eqref{eq:En-estm}, and \eqref{eq:E0-estm}.

Since the proof of \eqref{eq:Rn-estm} is long, we show \eqref{eq:En-estm} and \eqref{eq:E0-estm} first assuming that \eqref{eq:Rn-estm} is proved. For \eqref{eq:En-estm}, note that  
\algns{
	\mc{E}^{n+1} - \mc{E}^n \le 2 (C_{1,n} (\Delta t)^2 + C_{2,n} h^s)
}
is obtained by \eqref{eq:error-energy-eq} and \eqref{eq:Rn-estm}. Then \eqref{eq:En-estm} follows by induction. For \eqref{eq:E0-estm}, the triangle inequality and $\| \rho_u, \rho_a, \rho_w \|_{L^\infty}< \infty$ give 
\algn{ \label{eq:initial-intm-estm-2}
  \mc{E}^0 &\le \| e_{\ub}^{I,0} \|_{\rho_{u}} +  \|  e_{\vb}^{I,0} \|_{\rho_{a}} +  \|  e_{\wb}^{I,0} \|_{\rho_{w}} + C_0' h^s   \\
  \notag &\le C h^s \| \ub(0), \vb(0), \wb(0) \|_{s} + C_0' h^s , \quad \frac 12 < s \le k+1 + \delta
}
with $C>0$ depending on the shape regularity of $\mc{T}_h$ and $\| \rho_u, \rho_a, \rho_w \|_{L^\infty}$.

Before we prove \eqref{eq:Rn-estm}, let us review some interpolation error estimates from time discretization schemes.
Since the interpolation operator $I_h (= \Pi_h, \bs{P}_h, P_h)$ is independent in time, the time derivative of $I_h \sigma$ is same as $I_h \dot{\sigma}$ as long as they are well-defined pointwisely in time. Then, assuming that a general variable $\sigma \in L^2(t_n, t_{n+1}; L^2)$ is sufficiently regular, we can obtain 
\algn{
  \label{eq:time-intp-error-1} \Delta t \|  \dot \sigma^{n+\frac 12} - \bar{\pd}_t \sigma^{n+\frac 12} \|_0 &= \| \Delta t \dot \sigma^{n+\frac 12} - ( \sigma^{n+1} - \sigma^{n} ) \|_0 \\
  \notag &\le C \Delta t^2 \| \sigma \|_{\dot{W}^{3,1}(t_n, t_{n+1}; L^2)} , \\
  \label{eq:time-intp-error-2} \Delta t \| \bar{\pd}_t \sigma^{n+\frac 12} - \bar{\pd}_t I_h \sigma^{n+\frac 12} \|_0 &= \| \sigma^{n+1} - I_h \sigma^{n+1} - (\sigma^n - I_h \sigma^n) \|_0 \\
  \notag &= \left \| \int_{t_n}^{t_{n+1}} (\dot \sigma (s) - I_h \dot \sigma (s) )\,ds \right \|_0 \\
  \notag &\le C h^s \| \dot{\sigma} \|_{L^{1}(t_n, t_{n+1}; H^s)} 
}
with $s$ satisfying the range conditions in \eqref{eq:s-range-1}, \eqref{eq:s-range-2}, \eqref{eq:s-range-3}.

For the proof of \eqref{eq:Rn-estm}, it suffices to estimate the terms in \eqref{eq:F-terms} by the definition of $R^n$ in \eqref{eq:error-energy-eq}. 

By \eqref{eq:approx-assumption}, \eqref{eq:time-intp-error-1}, \eqref{eq:time-intp-error-2}, and the triangle inequality, and by assuming that the exact solution $\vb$ is sufficiently regular, one can show 
\mltln{ 
  \label{eq:Fv-estm} \Delta t | F_v^n (e_{\vb}^{h,n+\frac 12}) | \\
  \quad \le C \LRp{ (\Delta t)^2 \| \vb \|_{\dot{W}^{3,1}(t_n, t_{n+1}; L^2)} + h^s \| \dot \vb \|_{L^1(t_n, t_{n+1}; H^s)} } \| e_{\vb}^{h,n+\frac 12} \|_{\rho_a} 
}
with $C>0$ depending on $\| \rho_a, \rho_u \|_{L^\infty}$, $\|\rho_a\|_{L^\infty}^{-1}$. 
Noting the identity
\algns{
F_u^n (\bs{P}_h (\rho_u e_\ub^{h,n+\frac 12})) &=  - \LRp{  \bar{\pd}_t \bs{P}_h \ub^{n+\frac 12} - \dot \ub^{n+\frac 12} , \bs{P}_h (\rho_u e_\ub^{h,n+\frac 12})} \\
&= - \LRp{ \bar{\pd}_t \ub^{n+\frac 12} - \dot \ub^{n+\frac 12} , \bs{P}_h (\rho_u e_\ub^{h,n+\frac 12})}
}
and the inequality $\| \bs{P}_h (\rho_u e_\ub^{h,n+\frac 12}) \|_0 \le \| \rho_u e_\ub^{h,n+\frac 12} \|_0 \le \| \sqrt{\rho_u} \|_{L^\infty} \| e_\ub^{h,n+\frac 12} \|_{\rho_u}$, we obtain 
\algn{
  \label{eq:Fu-estm} \Delta t | F_u^n (\bs{P}_h (\rho_u e_\ub^{h,n+\frac 12})) | \le C (\Delta t)^2 \| \ub \|_{\dot{W}^{3,1}(t_n, t_{n+1}; L^2)}  \| e_{\ub}^{h,n+\frac 12} \|_{\rho_u} 
}
with $C>0$ depending on $\| \rho_u \|_{L^\infty}$ by \eqref{eq:time-intp-error-1}. A completely similar argument gives 
\algn{
  \label{eq:Fw-estm} \Delta t | F_w^n (\bs{P}_h (\rho_w e_\wb^{h,n+\frac 12})) | &\le C (\Delta t)^2 \| \wb \|_{\dot{W}^{3,1}(t_n, t_{n+1}; L^2)}  \| e_{\wb}^{h,n+\frac 12} \|_{\rho_w} , \\
  \label{eq:Fq-estm} \Delta t | F_q^n (P_h (\rho_q e_q^{h,n+\frac 12})) | &\le C (\Delta t)^2 \| q \|_{\dot{W}^{3,1}(t_n, t_{n+1}; L^2)}  \| e_q^{h,n+\frac 12} \|_{\rho_q} , \\
  \label{eq:Fr-estm} \Delta t | F_r^n (P_h (\rho_r e_r^{h,n+\frac 12})) | &\le C (\Delta t)^2 \| r \|_{\dot{W}^{3,1}(t_n, t_{n+1}; L^2)}  \| e_{r}^{h,n+\frac 12} \|_{\rho_r} 
}
with $C>0$ depending on $\| \rho_w \|_{L^\infty}$, $\| \rho_q \|_{L^\infty}$, $\| \rho_r \|_{L^\infty}$, respectively. 

Now we only need to estimate the $F_p^n (e_p^{h,n+\frac 12})$-involved term but it needs an additional discussion because the standard approximation theory with $Q_h$ gives only a bound of $O(h^{k+1})$.
To obtain an estimate of $\| e_p^{h,n} \|_{\kappa_a^{-1}}$ with a bound of $O(h^s)$, $\frac 12 < s \le k+1+\delta$, we will use 
\algn{ 
  \label{eq:superconvergence-intp1} \left| \LRp{ \kappa_a^{-1} \bar{\pd}_t e_p^{I,n+\frac 12} , p'} \right|  &= \left| \LRp{ (\kappa_a^{-1} - P_0 \kappa_a^{-1} ) \bar{\pd}_t e_p^{I, n+\frac 12}, p'} \right| \\ 
  \notag &\le Ch \| \kappa_a^{-1} \|_{W_h^{1,\infty}} \| \sqrt{\kappa_a} \|_{L^\infty} \| \bar{\pd}_t e_p^{I, n+\frac 12} \|_0 \| p' \|_{\kappa_a^{-1}} , \\
  \label{eq:superconvergence-intp2} \left| \LRp{ \rho_u e_p^{I, n+\frac 12}, p'} \right| &= \left| \LRp{ (\rho_u - P_0 \rho_u ) e_p^{I, n+\frac 12}, p'} \right| \\ 
  \notag &\le Ch \| \rho_u \|_{W_h^{1,\infty}} \| \sqrt{\kappa_a} \|_{L^\infty} \| e_p^{I,n+\frac 12} \|_{0} \| p' \|_{\kappa_a^{-1}} .
}
By \eqref{eq:approx-assumption}, \eqref{eq:superconvergence-intp1}, \eqref{eq:superconvergence-intp2}, \eqref{eq:time-intp-error-1}, \eqref{eq:time-intp-error-2}, assuming that the exact solutions are sufficiently regular, one can obtain 
\mltln{
  \label{eq:Fp-estm} \Delta t | F_p^n (e_p^{h,n+\frac 12}) | \\
  \le C \LRp{ (\Delta t)^2 \| p \|_{\dot{W}^{3,1}(t_n, t_{n+1}; L^2)} + h^{s} \| \dot p \|_{L^1(t_n, t_{n+1}; H^{s_0})} } \| e_p^{h,n+\frac 12} \|_{\kappa_a^{-1}} 
}
for $0 \le s \le k+1+\delta$ with $C>0$ depending on $\| \kappa_a \|_{L^\infty}$, $\| \rho_u \|_{W_h^{1,\infty}}$, $\| \kappa_a^{-1} \|_{W_h^{1,\infty}}$. Combining \eqref{eq:Fv-estm}, \eqref{eq:Fu-estm}, \eqref{eq:Fw-estm}, \eqref{eq:Fq-estm}, \eqref{eq:Fr-estm}, \eqref{eq:Fp-estm}, and the triangle inequality with the definition of $R^n$, we can obtain \eqref{eq:Rn-estm} with $C_{1,n}$, $C_{2,n}$ with the dependence described in \eqref{eq:C1n-depend}, \eqref{eq:C2n-depend}. 
\end{proof}
\begin{theorem} \label{thm:error-estm}
  Suppose that the assumptions of Proposition~\ref{prop:error-estm} hold and $\delta$ is defined in the same way. Then 
  \algn{ \label{eq:uvw-estm}
    \| \ub^n - \ub_h^n \|_{\rho_u} + \| \vb^n - \vb_h^n \|_{\rho_a} + \| \wb^n - \wb_h^n \|_{\rho_w} \le \mc{E}^n + C h^s \| \ub, \vb, \wb \|_{C^0([0,T]; H^s)}
  }
  with $\frac 12 < s \le k + 1 + \delta$ where $C$ depends on the shape regularity of $\mc{T}_h$ and the degree $k$. Similarly, 
  \algn{ \label{eq:pqr-estm}
    \| p^n - p_h^n \|_{\kappa_a^{-1}} + \| q^n - q_h^n \|_{\rho_q} + \| r^n - r_h^n \|_{\rho_r} \le \mc{E}^n + C h^s \| p, q, r \|_{C^0([0,T]; H^s)}
  }
  with $0 \le s \le k+1$. 
\end{theorem}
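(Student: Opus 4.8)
The plan is to obtain both \eqref{eq:uvw-estm} and \eqref{eq:pqr-estm} directly from the error splittings \eqref{eq:err-decomp-v}--\eqref{eq:err-decomp-r}, the triangle inequality in each weighted $L^2$-norm, and the interpolation bound \eqref{eq:approx-assumption}; no fresh analysis is required, since the genuinely hard estimate on the discrete errors was already packaged into $\mc{E}^n$ by Proposition~\ref{prop:error-estm}. Concretely, for each variable $\sigma \in \{\ub,\vb,\wb,p,q,r\}$ with associated weight $\rho \in \{\rho_u,\rho_a,\rho_w,\kappa_a^{-1},\rho_q,\rho_r\}$ I would write $e_\sigma^n = e_\sigma^{I,n} + e_\sigma^{h,n}$ as in \eqref{eq:err-decomp-v}--\eqref{eq:err-decomp-r}, so that $\|e_\sigma^n\|_\rho \le \|e_\sigma^{I,n}\|_\rho + \|e_\sigma^{h,n}\|_\rho$, and then treat the two pieces separately.

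For the discrete pieces, each $\|e_\sigma^{h,n}\|_\rho$ is exactly one of the six nonnegative summands in the definition \eqref{eq:En} of $(\mc{E}^n)^2$, hence $\|e_\sigma^{h,n}\|_\rho \le \mc{E}^n$; collecting the three relevant terms on the left-hand side of \eqref{eq:uvw-estm} (resp.\ \eqref{eq:pqr-estm}) therefore produces the $\mc{E}^n$ contribution. For the interpolation pieces, I would bound the weighted norm by the unweighted one, $\|e_\sigma^{I,n}\|_\rho \le \|\sqrt{\rho}\|_{L^\infty}\,\|e_\sigma^{I,n}\|_0$, and apply \eqref{eq:approx-assumption}: with $I_h = \bs{P}_h$ for $\sigma = \ub,\wb$ and $I_h = \Pi_h$ for $\sigma = \vb$ this gives $\|e_\sigma^{I,n}\|_0 \le C h^s\|\sigma^n\|_s \le C h^s\|\ub,\vb,\wb\|_{C^0([0,T];H^s)}$, where the admissible exponents are the intersection of \eqref{eq:s-range-1} and \eqref{eq:s-range-2}, namely $\frac12 < s \le k+1+\delta$, the lower constraint being forced by the canonical interpolant $\Pi_h$ acting on $\vb$. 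The identical computation with $I_h = P_h$ and the range \eqref{eq:s-range-3} yields \eqref{eq:pqr-estm}, but now only for $0 \le s \le k+1$, since $Q_h$ consists of piecewise polynomials of degree at most $k$ and cannot deliver approximation order beyond $h^{k+1}$.

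There is no real obstacle here; the statement is essentially a clean repackaging of the error decomposition. The only item requiring any care is the bookkeeping of the constant $C$, which absorbs the weight factors $\|\sqrt{\rho}\|_{L^\infty}$ from passing to the unweighted norm and the interpolation constant in \eqref{eq:approx-assumption} (depending on the shape regularity of $\mc{T}_h$, on the degree $k$, and, through $\delta$, on whether $\Vb_h$ is a BDM or RTN space), together with the harmless numerical factor coming from summing the three discrete-error terms into $\mc{E}^n$. Everything $\Delta t$- and $h$-dependent beyond the plain interpolation error has already been subsumed into $\mc{E}^n$ via Proposition~\ref{prop:error-estm}, so the argument is short, and the only mild subtlety worth flagging is the reduced exponent range in \eqref{eq:pqr-estm} stemming from the polynomial degree of $Q_h$.
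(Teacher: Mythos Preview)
Your proposal is correct and follows essentially the same approach as the paper: both arguments use the error splittings \eqref{eq:err-decomp-v}--\eqref{eq:err-decomp-r}, apply the triangle inequality in the weighted norms, bound the discrete parts $\|e_\sigma^{h,n}\|_\rho$ by $\mc{E}^n$, and control the interpolation parts via \eqref{eq:approx-assumption} with the appropriate ranges \eqref{eq:s-range-1}--\eqref{eq:s-range-3}. Your write-up is slightly more explicit about the bookkeeping (in particular the numerical factor from summing three discrete-error terms and the role of $\|\sqrt{\rho}\|_{L^\infty}$), but the substance is identical.
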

\begin{proof}
By the triangle inequality and the definition of $\mc{E}^n$, 
\mltln{ \label{eq:error-intm-estm-3}
  \| \ub^n - \ub_h^n \|_{\rho_u} + \| \vb^n - \vb_h^n \|_{\rho_a} + \| \wb^n - \wb_h^n \|_{\rho_w} \\
  \le \mc{E}^n + \| e_{\ub}^{I,n} \|_{\rho_u} + \| e_{\vb}^{I,n} \|_{\rho_a} + \| e_{\wb}^{I,n} \|_{\rho_w} .
}
By the approximation properties of $\Pi_h$ and $\bs{P}_h$, 
\mltln{ \label{eq:initial-intm-estm-1}
  \| e_{\ub}^{I,n} \|_{\rho_u}  + \| e_{\vb}^{I,n} \|_{\rho_a} + \| e_{\wb}^{I,n} \|_{\rho_w} \\
  \le C h^s \| \ub, \vb, \wb \|_{C^0([0,T]; H^s)} , \quad \frac 12 < s \le k+1 +\delta
}
holds with $C>0$ depending on the shape regularity of $\mc{T}_h$ and $\| \rho_u, \rho_a, \rho_w \|_{L^\infty}$.  Then \eqref{eq:uvw-estm} follows.

Similarly, the triangle inequality gives 
\algns{
    \| p^n - p_h^n \|_{\kappa_a^{-1}} + \| q^n - q_h^n \|_{\rho_q} + \| r^n - r_h^n \|_{\rho_r} \le \mc{E}^n + \| e_{p}^{I,n} \|_{\kappa_a^{-1}} + \| e_{q}^{I,n} \|_{\rho_q} + \| e_{r}^{I,n} \|_{\rho_r} .
}
Then 
\algns{
  \| e_{p}^{I,n} \|_{\kappa_a^{-1}} + \| e_{q}^{I,n} \|_{\rho_q} + \| e_{r}^{I,n} \|_{\rho_r} \le C h^s \| p, q, r \|_{C^0([0,T]; H^s)}, \quad 1 \le s \le k +1
}
holds with $C>0$ depending on the shape regularity of $\mc{T}_h$ and $\| \kappa_a^{-1}, \rho_q, \rho_r \|_{L^\infty}$, so \eqref{eq:pqr-estm} follows. 
\end{proof}

\subsection{Error analysis for post-processed solutions} \label{subsec:post-processing}
If $\Vb_h$ is a BDM element, then $\delta = 1$ and the optimal convergence rate in \eqref{eq:pqr-estm} is one order lower than the one in \eqref{eq:uvw-estm}. This lower convergence rate can be circumvented by a local post-processing which we introduce below. 

Throughout this subsection we assume that the exact solutions are sufficiently regular and we will not concern about low regularity of exact solutions. 
In our local post-processing, we first find $\{p_h^{n, *}\}_{n=0}^{N-1}$, a new numerical solution approximating $p(t_n + \Delta t/2)$, not $p(t_n)$. The goal is to show that $\| p_h^{n, *} - p(t_n + \Delta t/2) \|_0$ can have $O(h^{k+2})$ convergence rate.

To define the local post-processing let us define 
\algns{
  Q_h^* &= \{ q \in L^2(\Omega)\,:\, q|_K \in \mc{P}_{k+2}(K), \quad K \in \mc{T}_h \} .
}
We define $p_h^{n,*} \in Q_h^*$ as 
\algn{
\label{eq:pstar-def-1} \int_K p_h^{n,*} \, dx &= \int_K p_h^{n+\frac 12} \, dx, \\
\label{eq:pstar-def-2} \LRp{\grad p_h^{n,*}, \grad p' }_K &= - \LRp{\rho_a \bar{\pd}_t \vb^{n+\frac 12} , \grad p'}_K - \LRp{\rho_u \ub_h^{n+\frac 12} , \grad p'}_K  \\
  \notag &\quad + \LRp{\fb^{n+\frac 12} , \grad p'}_K , \quad \forall p' \in Q_h^*
}
for all $K \in \mc{T}_h$. 
Note that this post-processing is solving a system with a block diagonal matrix such that the size of each matrix block is the number of DOFs of $Q_h^*$ on one simplex $K$. Therefore, the computational costs are negligibly small compared to the computational costs of the original linear system. 
\begin{lemma}
  Suppose that the assumptions of Proposition~\ref{prop:error-estm} hold and $\Vb_h$ is a BDM element. If $\{p_h^{n,*}\}_{n=0}^{N-1}$ is defined as in \eqref{eq:pstar-def-1}, \eqref{eq:pstar-def-2}, and the exact solution $(\vb, p, \ub, \wb, q, r)$ is sufficiently regular, then 
\algns{
  \| p(t_n + \Delta t/2) - p_h^{*,n+\frac 12} \|_0 \le C ((\Delta t)^2 + h^{k+2} ) 
}
with a constant $C>0$ which depends on the constants $C_0$, $C_1$, $C_2$ in Proposition~\ref{prop:error-estm}, $\| \ub \|_{C^0([0,T]; H^{k+1})}$, $\| p \|_{C^0([0,T]; H^{k+2})}$, and $\| p \|_{\dot{W}^{2,\infty}(t_n, t_{n+1}; L^2)}$. 
\end{lemma}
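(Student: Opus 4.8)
The plan is to split the error $p(t_n+\Delta t/2) - p_h^{*,n+\frac12}$ into a time-consistency part and a spatial part, estimate each, and combine. First I would introduce the ``elliptic-projection-like'' object $\pi_h p^{n+\frac12,*} \in Q_h^*$ defined elementwise by the same two conditions \eqref{eq:pstar-def-1}--\eqref{eq:pstar-def-2} but with the \emph{exact} data $\dot\vb^{n+\frac12}$, $\ub^{n+\frac12}$, $\fb^{n+\frac12}$, $p^{n+\frac12}$ in place of their discrete/averaged counterparts, and observe that by \eqref{eq:original-eq-1} we have $\grad p^{n+\frac12} = \fb^{n+\frac12} - \rho_a\dot\vb^{n+\frac12} - \rho_u\ub^{n+\frac12}$ on each $K$, so that $\grad(p^{n+\frac12} - \pi_h p^{n+\frac12,*})$ is $L^2(K)$-orthogonal to $\grad Q_h^*$; together with the mean-value condition, a standard Bramble--Hilbert/Poincar\'e argument on each simplex gives $\|p^{n+\frac12} - \pi_h p^{n+\frac12,*}\|_0 \le Ch^{k+2}\|p\|_{C^0([0,T];H^{k+2})}$. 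Next, a Taylor-expansion estimate (as in \eqref{eq:time-intp-error-1}) gives $\|p(t_n+\Delta t/2) - p^{n+\frac12}\|_0 \le C(\Delta t)^2\|p\|_{\dot W^{2,\infty}(t_n,t_{n+1};L^2)}$, and since $\pi_h$ is an $H^1$-stable-type local operator, the same bound transfers to $\|\pi_h(p(t_n+\Delta t/2)) - \pi_h p^{n+\frac12,*}\|_0$ if needed; effectively these two observations reduce everything to bounding $\|\pi_h p^{n+\frac12,*} - p_h^{*,n+\frac12}\|_0$.

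For that last term, I would subtract the defining equations: on each $K$, the mean of $\pi_h p^{n+\frac12,*} - p_h^{*,n+\frac12}$ equals the mean of $p^{n+\frac12} - p_h^{n+\frac12} = e_p^{n+\frac12}$, and
\[
\LRp{\grad(\pi_h p^{n+\frac12,*} - p_h^{*,n+\frac12}), \grad p'}_K
= -\LRp{\rho_a(\dot\vb^{n+\frac12} - \bar\pd_t\vb^{n+\frac12}_{(h)}), \grad p'}_K
 - \LRp{\rho_u(\ub^{n+\frac12} - \ub_h^{n+\frac12}), \grad p'}_K,
\]
where the $\vb$-term combines a time-truncation piece $\dot\vb^{n+\frac12} - \bar\pd_t\vb^{n+\frac12}$ (bounded by $(\Delta t)^2\|\vb\|_{\dot W^{3,1}(t_n,t_{n+1};L^2)}$ via \eqref{eq:time-intp-error-1}) with the spatial velocity error $\bar\pd_t e_\vb^{n+\frac12}$, and the $\ub$-term is controlled by $\|e_\ub^{n+\frac12}\|_0$. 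Using an inverse inequality to pass from $\|\grad(\cdot)\|_0$ on $Q_h^*$ back to $\|\cdot\|_0$ after the Poincar\'e step on the mean-zero part, together with the already-established estimates $\mc{E}^{n+\frac12} \le C_0h^s + C_1(\Delta t)^2 + C_2 h^s$ from Proposition~\ref{prop:error-estm} (applied with $s=k+1$, recalling $\delta=1$ for BDM so this is $O(h^{k+1})$) and the interpolation bounds \eqref{eq:approx-assumption}, one gets $\|\pi_h p^{n+\frac12,*} - p_h^{*,n+\frac12}\|_0 \le C((\Delta t)^2 + h^{k+1})$ — which is \emph{not} yet $O(h^{k+2})$. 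The resolution, and the crux of the argument, is that the $\grad p'$ test function kills constants, so one replaces $\rho_a$, $\rho_u$ by $\rho_a - P_0\rho_a$, $\rho_u - P_0\rho_u$ (as in \eqref{eq:superconvergence-intp1}--\eqref{eq:superconvergence-intp2}) and also exploits that $\bs P_h$-orthogonality lets one replace $e_\ub^{n+\frac12}$, $\bar\pd_t e_\vb^{n+\frac12}$ by their $h$-fractions $e_\ub^{h,n+\frac12}$ etc.\ plus an extra factor $h$ from the coefficient oscillation; combined with the superconvergence-type estimate for $\|e_p^{h,n}\|$ this should upgrade the velocity/auxiliary contributions to the missing extra power of $h$.

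The main obstacle I anticipate is precisely this gap: naively the spatial velocity error $e_\vb$ contributes only $O(h^{k+1})$ (even for BDM, since $\Wb_h$ for the $\bu$-type variable and the $L^2$ velocity error are $O(h^{k+1})$ in the worst coefficient arrangement), so to reach $O(h^{k+2})$ one genuinely needs the coefficient-oscillation trick \emph{and} the commuting-projection orthogonality $\LRp{\div e_\vb^{h,n},q'}=0$, which supplies an extra $h$ when testing against gradients of higher-degree polynomials. I would handle the time-truncation terms first (they are already $O((\Delta t)^2)$ and routine), then carefully track where each power of $h$ comes from in the spatial terms, invoking Proposition~\ref{prop:error-estm} with $s = k+1$ for the pieces that do not need the upgrade and the oscillation bound \eqref{eq:superconvergence-intp2} for those that do. Finally I would assemble the three contributions by the triangle inequality to conclude $\| p(t_n+\Delta t/2) - p_h^{*,n+\frac12}\|_0 \le C((\Delta t)^2 + h^{k+2})$ with $C$ depending on the quantities listed in the statement.
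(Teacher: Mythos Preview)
Your overall architecture (time-consistency split, gradient error equation, element-wise Poincar\'e to gain a factor $h$) matches the paper's, but there is a genuine gap in the velocity term, and the fix you propose is aimed at the wrong target.

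First, a misreading of Proposition~\ref{prop:error-estm}: for BDM one has $\delta=1$, so $s$ ranges up to $k+1+\delta=k+2$, and hence $\mc{E}^n \le C((\Delta t)^2 + h^{k+2})$, not $h^{k+1}$. In particular the $\ub$-contribution $\|\ub^{n+\frac12}-\ub_h^{n+\frac12}\|_{\rho_u}$ is already $O((\Delta t)^2 + h^{k+2})$ and needs no upgrade. The ``extra $h$'' in the final estimate does not come from any coefficient-oscillation trick; it comes simply from the element-wise Poincar\'e inequality: one bounds $\|\grad(P_h^*p^{n+\frac12}-p_h^{n,*})\|_0 \le C((\Delta t)^2 + h^{k+1})$ and then $\|P_h^*p^{n+\frac12}-p_h^{n,*}\|_0 \le Ch\,\|\grad(\cdot)\|_0 + \|P_0 e_p^{h,n+\frac12}\|_0$, the last term being controlled by $\mc{E}^n$. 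The tricks in \eqref{eq:superconvergence-intp1}--\eqref{eq:superconvergence-intp2} play no role in this lemma.

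The real obstacle, which your plan does not address, is bounding $\|\bar\pd_t e_\vb^{h,n+\frac12}\|_0$ uniformly in $\Delta t$. This is a \emph{discrete time derivative} of the discrete velocity error; Proposition~\ref{prop:error-estm} gives no control on it (the naive triangle inequality yields only $O(h^{k+2}/\Delta t)$, which blows up). Your oscillation idea $(\rho_a\to\rho_a-P_0\rho_a)$ cannot help here, because $e_\vb^{h,n}\in\Vb_h$ is a discrete error, not an interpolation remainder orthogonal to polynomials, so there is no cancellation to exploit; you would still need an independent bound on $\|\bar\pd_t e_\vb^{h,n+\frac12}\|_0$. The paper handles this by a discrete weighted Helmholtz-type decomposition (Arnold--Falk--Winther) $\bar\pd_t e_\vb^{h,n+\frac12}=\vb_0+\vb_1$ in $\Vb_h$ with $\div\vb_0=0$, $(\rho_a\vb_0,\vb_1)=0$, and $\|\div\vb_1\|_0\le C\|\vb_1\|_0$, and then tests the first error equation \eqref{eq:full-error-eq-1} with $\vb'=\vb_0$ and $\vb'=\vb_1$ separately. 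The divergence term vanishes for $\vb_0$ and is bounded by $C\|e_p^{h,n+\frac12}\|_0\|\vb_1\|_0$ for $\vb_1$, so that $\|\bar\pd_t e_\vb^{h,n+\frac12}\|_{\rho_a}\le C((\Delta t)^2 + h^{k+1})$ uniformly in $\Delta t$. Without this step (or an equivalent one) the argument does not close.
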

\begin{proof}
Recall that $p^{n+\frac 12} = \frac 12 (p^n + p^{n+1}) = \frac 12 (p(t_n) + p(t_{n+1}))$. 
We first note that 
\algns{
	\| p(t_n + \Delta t/2) - p^{n+\frac 12} \|_0 \le C(\Delta t)^2 \| p \|_{\dot{W}^{2,\infty}(t_n, t_{n+1}; L^2)}
}
by an argument with the Taylor expansion. By the triangle inequality it suffices to estimate $\| p^{n+\frac 12} - p_h^{*,n+\frac 12} \|_0$. 

To show an error estimate of $\| p^{n+\frac 12} - p_h^{n,*} \|_0$ consider the error equation
\mltlns{
  \LRp{ \grad (p^{n+\frac 12} - p_h^{n,*}), \grad p'} \\
   = - \LRp{ \rho_a (\dot \vb^{n+\frac 12} - \bar{\pd}_t \vb_h^{n+\frac 12}), \grad p'} - \LRp{\rho_u (\ub^{n+\frac 12} - \ub_h^{n+\frac 12}) , \grad p'} \quad \forall p' \in Q_h^*
}
from the definition of $p_h^{n,*}$ and \eqref{eq:original-eq-1}.

For $P_h^*$, the $L^2$ projection to $Q_h^*$, we can rewrite this equation as 
\algns{
  &\LRp{ \grad (P_h^* p^{n+\frac 12} - p_h^{n,*}), \grad p'} \\
  &= - \LRp{ \grad ( p^{n+\frac 12} - P_h^* p^{n+\frac 12}), \grad p'} \\
  &\quad - \LRp{ \rho_a (\dot \vb^{n+\frac 12} - \bar{\pd}_t \vb_h^{n+\frac 12}), \grad p'} - \LRp{\rho_u (\ub^{n+\frac 12} - \ub_h^{n+\frac 12}) , \grad p'} .
}
If we take $p' = P_h^* p^{n+\frac 12} - p_h^{n,*}$ and use the Cauchy--Schwarz inequality, then we get
\algn{ 
  \notag &\| \grad (P_h^* p^{n+\frac 12} - p_h^{n,*}) \|_0 \\
  \label{eq:grad-pstar-estm} &\le  \| \grad ( p^{n+\frac 12} - P_h^* p^{n+\frac 12}) \|_0 + C \LRp{ \| \dot \vb^{n+\frac 12} - \bar{\pd}_t \vb_h^{n+\frac 12} \|_0 + \| \ub^{n+\frac 12} - \ub_h^{n+\frac 12} \|_{\rho_u} } \\
  \notag &=: I_1 + I_2 + I_3 
}
with $C>0$ depending on $\rho_a$ and $\Omega_{\rho}$.

To estimate $I_1$, assuming that $p$ is sufficiently regular, we use the Bramble--Hilbert lemma and get 
\algn{ \label{eq:I1-estm}
  \| \grad ( p^{n+\frac 12} - P_h^* p^{n+\frac 12}) \|_{0,K} \le Ch_K^{k+1} \| p^n, p^{n+1} \|_{k+2,K} , \qquad \forall K \in \mc{T}_h .
}
An estimate of $I_3$ is obtained by Theorem~\ref{thm:error-estm} as 
\algn{ \label{eq:I3-estm}
  \| \ub^{n+\frac 12} - \ub_h^{n+\frac 12} \|_{\rho_u} \le C( (\Delta t)^2 + h^{k+2} )
}
under the assumption that $\ub$ is sufficiently regular.

We estimate $I_2$ by estimating 
\algn{
  I_2^a := \| \dot \vb^{n+\frac 12} - \bar{\pd}_t \vb^{n+\frac 12} \|_0, \quad I_2^b := \| \bar{\pd}_t e_{\vb}^{I, n+\frac 12} \|_0, \quad I_2^c := \| \bar{\pd}_t e_{\vb}^{h, n+\frac 12} \|_0 .
}
By \eqref{eq:time-intp-error-1} and \eqref{eq:time-intp-error-2}, 
\algn{ 
  \label{eq:I2a-estm}  I_2^a &\le C (\Delta t)^2 \| \vb \|_{\dot{W}^{3,\infty}(t_n, t_{n+1}; L^2)} , \\
  \label{eq:I2b-estm}  I_2^b &\le Ch^{k+1} \| \dot \vb \|_{L^\infty (t_n, t_{n+1}; H^{k+1})} .
}

The estimate of $I_2^c$ is more technical. First, note that it is enough to estimate $\| \bar{\pd}_t e_{\vb}^{h,n+\frac 12} \|_{\rho_a}$ since $\rho_a>0$ is uniformly positive. It is known (cf. \cite{Arnold-Falk-Winther:1997,Arnold-Falk-Winther:2000}) that there is a decomposition $\bar{\pd}_t e_{\vb}^{h,n+\frac 12} = \vb_0 + \vb_1$ with $\vb_0, \vb_1 \in \Vb_h$ such that 
\algns{
  \div \vb_0 = 0, \quad \LRp{\rho_a \vb_0, \vb_1 } = 0, \quad \| \div \vb_1 \|_0 \le C \| \vb_1 \|_0 
}
with $C>0$ independent of $h$. 
Using this decomposition, we can rewrite \eqref{eq:full-error-eq-1} as 
\algns{
  \LRp{\rho_{a} (\vb_0 + \vb_1) , \vb'} - \LRp{ e_{p}^{h,n+\frac 12}, \div \vb'} + \LRp{ \rho_{u} e_{\ub}^{h,n+\frac 12}, \vb'} &= F_v^n (\vb') .
}
If $\vb' = \vb_1$, then 
\algns{
  \| \vb_1 \|_{\rho_a}^2 &\le (C \| e_p^{h, n+\frac 12} \|_0 + \| e_{\ub}^{h, n+\frac 12} \|_{\rho_u}) \| \vb_1 \|_0 + | F_v^n(\vb_1) | .
}
Recall that $\| e_p^{h, n+\frac 12} \|_0$, $\| e_{\ub}^{h, n+\frac 12} \|_{\rho_u}$ are estimated in Proposition~\ref{prop:error-estm} and $| F_v^n(\vb_1) |$ is estimated by \eqref{eq:Fv-estm}. As a consequence, 
\algns{
  \| \vb_1 \|_0 \le C (h^{k+1} + (\Delta t)^2)
}
holds with $C>0$ depending on $\| \vb \|_{\dot{W}^{3,\infty}(t_n, t_{n+1}; L^2)}$, $\| \dot \vb \|_{L^\infty(t_n, t_{n+1}; H^{k+1})}$, and the constants $C_0$, $C_1$, $C_2$ in Proposition~\ref{prop:error-estm}.  
If $\vb' = \vb_0$, then we get 
\algns{ 
  \| \vb_0 \|_{\rho_a}^2 &\le \| e_{\ub}^{h, n+\frac 12} \|_0 \| \vb_0 \|_0 + | F_v^n(\vb_0) | .
}
An estimate of $\| \vb_0 \|_0$ can be obtained by a completely similar argument for the estimate of $\| \vb_1 \|_0$. Therefore, by combining the estimates of $\| \vb_0 \|_0$ and $\| \vb_1 \|_0$, we have 
\algn{ \label{eq:I2c-estm}
  I_2 \le C((\Delta t)^2 + h^{k+1})
}
with $C>0$ depending on $\|  \vb \|_{\dot{W}^{3,\infty}(t_n, t_{n+1}; L^2)}$, $\| \dot \vb \|_{L^\infty(t_n, t_{n+1}; H^{k+1})}$, and the constants $C_0$, $C_1$, $C_2$ in Proposition~\ref{prop:error-estm}.

By combining \eqref{eq:grad-pstar-estm}, \eqref{eq:I1-estm}, \eqref{eq:I3-estm}, \eqref{eq:I2a-estm}, \eqref{eq:I2b-estm}, \eqref{eq:I2c-estm}, we obtained
\algns{
  \| \grad (P_h^* p^{n+\frac 12} - p_h^{*,n + \frac 12}) \|_0 \le C ((\Delta t)^2 + h^{k+1} ) .
}
An element-wise Poincar\'{e} inequality and the above estimate give
\algns{
  \| P_h^* p^{n+\frac 12} - p_h^{n,* } \|_0 \le Ch \| \grad (P_h^* p^{n+\frac 12} - p_h^{n,*}) \|_0 \le Ch ((\Delta t)^2 + h^{k+1} ) .
}
From this we can obtain 
\algns{
\| p^{n+\frac 12} - p_h^{n,*} \|_0 &\le \| p^{n+\frac 12} - P_h^* p^{n,*} \|_0 + \| P_h^* p^{n+\frac 12} - p_h^{n,*} \|_0 \\
&\le C h^{k+2} \| p^{n+\frac 12} \|_{k+2} + Ch((\Delta t)^2 + h^{k+1} ),
}
which is the desired estimate.
\end{proof}

\section{Numerical results} \label{sec:numerical}

In this section we present the results of numerical experiments to illustrate the validity of our theoretical analysis.\footnote{The datasets generated during and/or analyzed during the current study are available from the corresponding author on reasonable request.} 

In the first set of experiments we use a manufactured solution and show the convergence rates of errors with various finite element discretizations. Specifically, let $\Omega = [0,1]\times [0,1]$ with the subdomain $\Omega_0 = [3/8, 5/8] \times [0,1]$. We set $\rho_a = \kappa_a = \omega_{\rho} = \omega_{\kappa} = 1$, $\gamma=0$ on $\Omega$ whereas $\Omega_{\rho}$, $\Omega_{\kappa}$ are defined as 
\algns{
	\Omega_{\rho} = \Omega_{\kappa} = 
	\begin{cases}
		1	\text{ on } \Omega_0 \\
		0	\text{ on } \Omega \setminus \overline{\Omega}_0
	\end{cases}
}
A manufactured solution is constructed with 
\algn{ \label{eq:exact-solution}
	\wb(x,y) = \pmat{(1+\sin t) (x^2 y + x y^2) \\ \cos(2t) (x+y + \cos x) }, \qquad r(x,y) = \cos(3t) xy ,
}
and the other functions $\ub(x,y)$, $\vb(x,y)$, $q(x,y)$, $p(x,y)$, $\fb(x,y)$, $g(x,y)$ are defined by \eqref{eq:original-eqs}.

\begin{table}[!h] 
\setlength{\tabcolsep}{4.8pt} 
\caption{Errors and convergence rates with $\Vb_h(K) \times Q_h(K) \times \Wb_h(K)= {\rm BDM}_1 (K) \times \mc{P}_0 (K) \times \mc{P}_1(K; \R^d)$ for the exact solution in \eqref{eq:exact-solution}.  }
\label{table:BDM1}
\begin{tabular} 
{>{\small}c >{\small}c >{\small}c >{\small}c >{\small}c >{\small}c >{\small}c >{\small}c >{\small}c } 
 \hline 
\multirow{2}{*}{$\frac 1h$} & \multicolumn{2}{>{\small}c}{$\|\vb - \vb_h \|_0$} & \multicolumn{2}{>{\small}c}{$\| \ub - \ub_h \|_0$} & \multicolumn{2}{>{\small}c}{$\| \wb - \wb_h \|_0$} & \multicolumn{2}{>{\small}c}{$\| p - p_h \|_0$} \\ 
 & error & rate & error & rate & error & rate & error & rate \\ \hline 
8 & 5.53e-03 &  -- & 9.51e-03 &  -- & 3.65e-03 &  -- & 1.65e-01 &  -- \\  
16 & 1.34e-03 & 2.04 & 2.39e-03 & 1.99 & 9.15e-04 & 2.00 & 8.26e-02 & 1.00 \\  
32 & 3.49e-04 & 1.94 & 5.98e-04 & 2.00 & 2.29e-04 & 2.00 & 4.13e-02 & 1.00 \\  
64 & 8.42e-05 & 2.05 & 1.49e-04 & 2.00 & 5.72e-05 & 2.00 & 2.06e-02 & 1.00 \\  \hline 
\multirow{2}{*}{$\frac 1h$} & \multicolumn{2}{>{\small}c}{$\| p - p_h^* \|_0$} & \multicolumn{2}{>{\small}c}{$\| q - q_h \|_0$} & \multicolumn{2}{>{\small}c}{$\| r - r_h \|_0$}\\ 
 & error & rate & error & rate & error & rate \\ \hline 
8 & 5.53e-03 &  -- & 9.51e-03 &  -- & 3.65e-03 &  -- \\  
16 & 1.34e-03 & 2.04 & 2.39e-03 & 1.99 & 9.15e-04 & 2.00 \\  
32 & 3.49e-04 & 1.94 & 5.98e-04 & 2.00 & 2.29e-04 & 2.00 \\  
64 & 8.42e-05 & 2.05 & 1.49e-04 & 2.00 & 5.72e-05 & 2.00 \\  
 \hline
\end{tabular} 
\end{table}

\begin{table}[!h] 
\setlength{\tabcolsep}{4.8pt} 
\caption{Errors and convergence rates with $\Vb_h(K) \times Q_h(K) \times \Wb_h(K)= {\rm RTN}_0 (K) \times \mc{P}_0 (K) \times \mc{P}_0(K; \R^d)$ for the exact solution in \eqref{eq:exact-solution}.  }
\label{table:RTN1}
\begin{tabular} 
{>{\small}c >{\small}c >{\small}c >{\small}c >{\small}c >{\small}c >{\small}c >{\small}c >{\small}c } 
 \hline 
\multirow{2}{*}{$\frac 1h$} & \multicolumn{2}{>{\small}c}{$\|\vb - \vb_h \|_0$} & \multicolumn{2}{>{\small}c}{$\| \ub - \ub_h \|_0$} & \multicolumn{2}{>{\small}c}{$\| \wb - \wb_h \|_0$} & \multicolumn{2}{>{\small}c}{$\| p - p_h \|_0$} \\ 
 & error & rate & error & rate & error & rate & error & rate \\ \hline 
8 & 1.78e-01 &  -- & 7.50e-02 &  -- & 7.61e-02 &  -- & 1.65e-01 &  -- \\  
16 & 8.86e-02 & 1.01 & 3.74e-02 & 1.00 & 3.81e-02 & 1.00 & 8.26e-02 & 1.00 \\  
32 & 4.43e-02 & 1.00 & 1.87e-02 & 1.00 & 1.90e-02 & 1.00 & 4.13e-02 & 1.00 \\  
64 & 2.21e-02 & 1.00 & 9.34e-03 & 1.00 & 9.52e-03 & 1.00 & 2.06e-02 & 1.00 \\  \hline 
\multirow{2}{*}{$\frac 1h$} & \multicolumn{2}{>{\small}c}{$\| p - p_h^* \|_0$} & \multicolumn{2}{>{\small}c}{$\| q - q_h \|_0$} & \multicolumn{2}{>{\small}c}{$\| r - r_h \|_0$}\\ 
 & error & rate & error & rate & error & rate \\ \hline 
8 & 1.78e-01 &  -- & 7.50e-02 &  -- & 7.61e-02 &  -- \\  
16 & 8.86e-02 & 1.01 & 3.74e-02 & 1.00 & 3.81e-02 & 1.00 \\  
32 & 4.43e-02 & 1.00 & 1.87e-02 & 1.00 & 1.90e-02 & 1.00 \\  
64 & 2.21e-02 & 1.00 & 9.34e-03 & 1.00 & 9.52e-03 & 1.00 \\  
\hline
\end{tabular} 
\end{table}

\begin{table}[!h] 
\setlength{\tabcolsep}{4.8pt} 
\caption{Errors and convergence rates with $\Vb_h(K) \times Q_h(K) \times \Wb_h(K)= {\rm BDM}_2 (K) \times \mc{P}_1 (K) \times \mc{P}_2(K; \R^d)$ for the exact solution in \eqref{eq:exact-solution}.  }
\label{table:BDM2}
\begin{tabular} 
{>{\small}c >{\small}c >{\small}c >{\small}c >{\small}c >{\small}c >{\small}c >{\small}c >{\small}c } 
 \hline 
\multirow{2}{*}{$\frac 1h$} & \multicolumn{2}{>{\small}c}{$\|\vb - \vb_h \|_0$} & \multicolumn{2}{>{\small}c}{$\| \ub - \ub_h \|_0$} & \multicolumn{2}{>{\small}c}{$\| \wb - \wb_h \|_0$} & \multicolumn{2}{>{\small}c}{$\| p - p_h \|_0$} \\ 
 & error & rate & error & rate & error & rate & error & rate \\ \hline 
8 & 1.18e-04 &  -- & 1.43e-04 &  -- & 5.91e-05 &  -- & 4.03e-03 &  -- \\  
16 & 1.08e-05 & 3.44 & 9.90e-06 & 3.85 & 6.22e-06 & 3.25 & 1.01e-03 & 2.00 \\  
32 & 1.38e-06 & 2.97 & 8.13e-07 & 3.61 & 7.37e-07 & 3.08 & 2.52e-04 & 2.00 \\  
64 & 1.74e-07 & 2.98 & 8.33e-08 & 3.29 & 9.08e-08 & 3.02 & 6.30e-05 & 2.00 \\  \hline 
\multirow{2}{*}{$\frac 1h$} & \multicolumn{2}{>{\small}c}{$\| p - p_h^* \|_0$} & \multicolumn{2}{>{\small}c}{$\| q - q_h \|_0$} & \multicolumn{2}{>{\small}c}{$\| r - r_h \|_0$}\\ 
 & error & rate & error & rate & error & rate \\ \hline 
8 & 1.18e-04 &  -- & 1.43e-04 &  -- & 5.91e-05 &  -- \\  
16 & 1.08e-05 & 3.44 & 9.90e-06 & 3.85 & 6.22e-06 & 3.25 \\  
32 & 1.38e-06 & 2.97 & 8.13e-07 & 3.61 & 7.37e-07 & 3.08 \\  
64 & 1.74e-07 & 2.98 & 8.33e-08 & 3.29 & 9.08e-08 & 3.02 \\  
\hline
\end{tabular} 
\end{table}

\begin{table}[!h] 
\setlength{\tabcolsep}{4.8pt} 
\caption{Errors and convergence rates with $\Vb_h(K) \times Q_h(K) \times \Wb_h(K)= {\rm RTN}_1 (K) \times \mc{P}_1 (K) \times \mc{P}_1(K; \R^d)$ for the exact solution in \eqref{eq:exact-solution}.  }
\label{table:RTN2}
\begin{tabular} 
{>{\small}c >{\small}c >{\small}c >{\small}c >{\small}c >{\small}c >{\small}c >{\small}c >{\small}c } 
 \hline 
\multirow{2}{*}{$\frac 1h$} & \multicolumn{2}{>{\small}c}{$\|\vb - \vb_h \|_0$} & \multicolumn{2}{>{\small}c}{$\| \ub - \ub_h \|_0$} & \multicolumn{2}{>{\small}c}{$\| \wb - \wb_h \|_0$} & \multicolumn{2}{>{\small}c}{$\| p - p_h \|_0$} \\ 
 & error & rate & error & rate & error & rate & error & rate \\ \hline 
8 & 2.97e-03 &  -- & 2.23e-03 &  -- & 2.75e-03 &  -- & 4.10e-03 &  -- \\  
16 & 6.93e-04 & 2.10 & 5.58e-04 & 2.00 & 6.88e-04 & 2.00 & 1.01e-03 & 2.02 \\  
32 & 2.01e-04 & 1.79 & 1.40e-04 & 2.00 & 1.72e-04 & 2.00 & 2.55e-04 & 1.99 \\  
64 & 4.56e-05 & 2.14 & 3.49e-05 & 2.00 & 4.30e-05 & 2.00 & 6.40e-05 & 1.99 \\  \hline 
\multirow{2}{*}{$\frac 1h$} & \multicolumn{2}{>{\small}c}{$\| p - p_h^* \|_0$} & \multicolumn{2}{>{\small}c}{$\| q - q_h \|_0$} & \multicolumn{2}{>{\small}c}{$\| r - r_h \|_0$}\\ 
 & error & rate & error & rate & error & rate \\ \hline 
8 & 2.97e-03 &  -- & 2.23e-03 &  -- & 2.75e-03 &  -- \\  
16 & 6.93e-04 & 2.10 & 5.58e-04 & 2.00 & 6.88e-04 & 2.00 \\  
32 & 2.01e-04 & 1.79 & 1.40e-04 & 2.00 & 1.72e-04 & 2.00 \\  
64 & 4.56e-05 & 2.14 & 3.49e-05 & 2.00 & 4.30e-05 & 2.00 \\ 
\hline 
\end{tabular} 
\end{table} 

We consider 4 different spatial discretizations such that the local finite element spaces $\Vb_h(K) \times Q_h(K) \times \Wb_h(K)$ are
\algn{
\label{eq:space-disc1}  {\rm BDM}_1 (K) \times \mc{P}_0 (K) \times \mc{P}_1(K; \R^d), \qquad 	{\rm RTN}_0 (K) \times \mc{P}_0 (K) \times \mc{P}_0(K; \R^d), \\
\label{eq:space-disc2} 	{\rm BDM}_2 (K) \times \mc{P}_1 (K) \times \mc{P}_2(K; \R^d), \qquad 	{\rm RTN}_1 (K) \times \mc{P}_1 (K) \times \mc{P}_1(K; \R^d) .
}

For triangulation we use the structured meshes obtained by the bisection of uniform $N \times N$ squares of $\Omega$ for $N = 8, 16, 32, 64$. Then the maximum mesh size is a multiple of $h = 1/N$ with a uniform constant independent of $N$. 
We use the Crank--Nicolson scheme for time discretization with $\Delta t = h$ for \eqref{eq:space-disc1} and with $\Delta t = h^2$ for \eqref{eq:space-disc2} in order to see optimal convergence rates of spatial discretization errors. For these four different discretization schemes the errors and convergence rates are presented in Tables~
\ref{table:BDM1}--\ref{table:RTN2}. 

In these experiments the errors of $\| \ub - \ub_h \|_0$, $\| \vb - \vb_h \|_0$, $\| \wb - \wb_h \|_0$, $\| p - p_h \|_0$, $\| q - q_h \|_0$, $\| r- r_h \|_0$ are computed at $T = 0.25$ whereas the error $\| p - p_h^* \|_0$ is computed at $T - \frac 12 \Delta t$. Although we used the weighted norms $\| \cdot \|_{\rho_u}$, $\| \cdot \|_{\rho_w}$, $\| \cdot \|_{\rho_q}$, $\| \cdot \|_{\rho_r}$ in our error analysis for the errors of $\ub$, $\wb$, $q$, $r$, here we compute the standard $L^2$ norms for those errors. Since the $L^2$ norms are the upper bounds of the weighted norms, the optimal convergence rates of the $L^2$ norm errors are the results stronger than the optimal convergence rates of the weighted norm errors. In all of these experiments we can see the convergence rates which are expected in our error analysis.

\begin{figure}[h!] 
\centering
\includegraphics[width=60mm, height=60mm]{./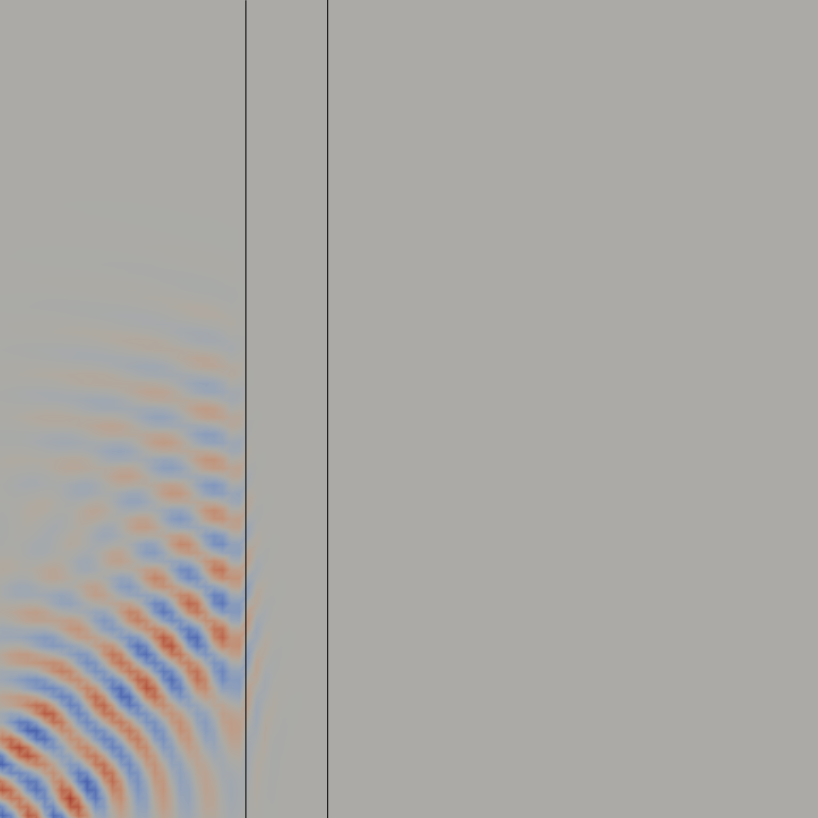} \quad 
\includegraphics[width=60mm, height=60mm]{./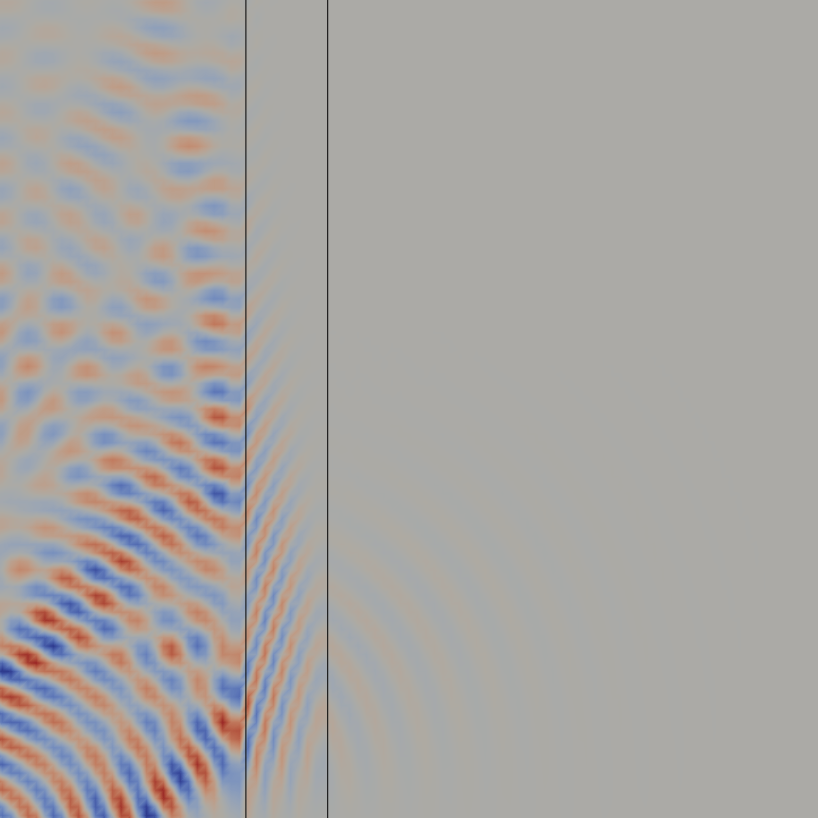} 
\caption{Wave propagation with $p_D$ in \eqref{eq:left-bottom-source}, $\mu_f = 18$, at $t = 0.2, 0.4$ }
\label{fig:wave-mu-18}
\end{figure} 
\begin{figure} 
\centering
\includegraphics[width=60mm, height=60mm]{./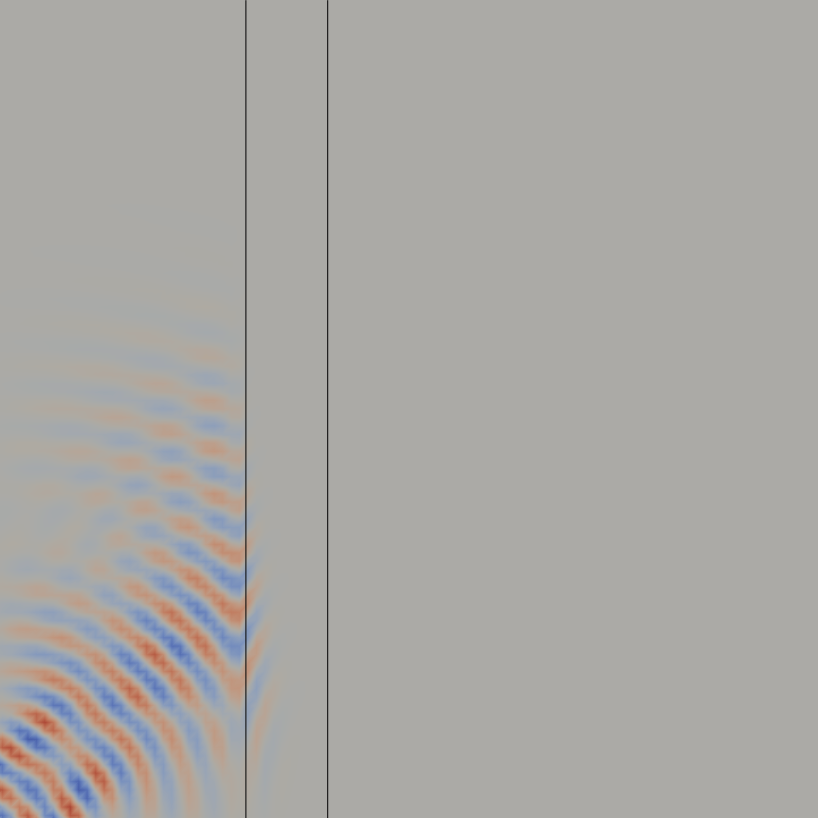} \quad 
\includegraphics[width=60mm, height=60mm]{./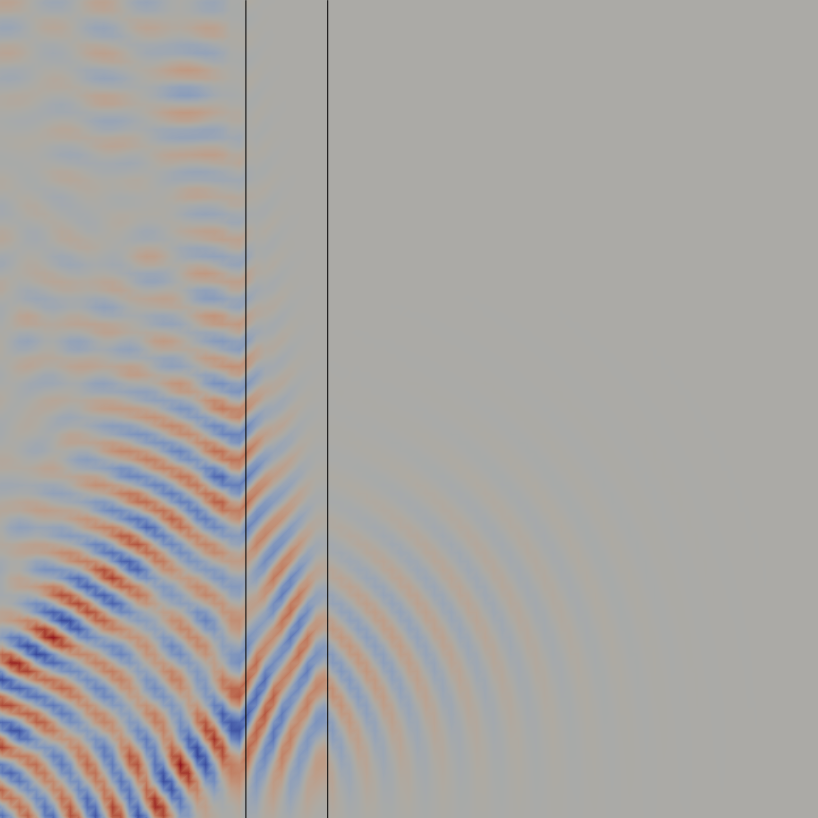} 
\caption{Wave propagation with $p_D$ in \eqref{eq:left-bottom-source}, $\mu_f = 19$, at $t = 0.2, 0.4$}
\label{fig:wave-mu-19}
\end{figure} 
\begin{figure} 
\centering
\includegraphics[width=60mm, height=60mm]{./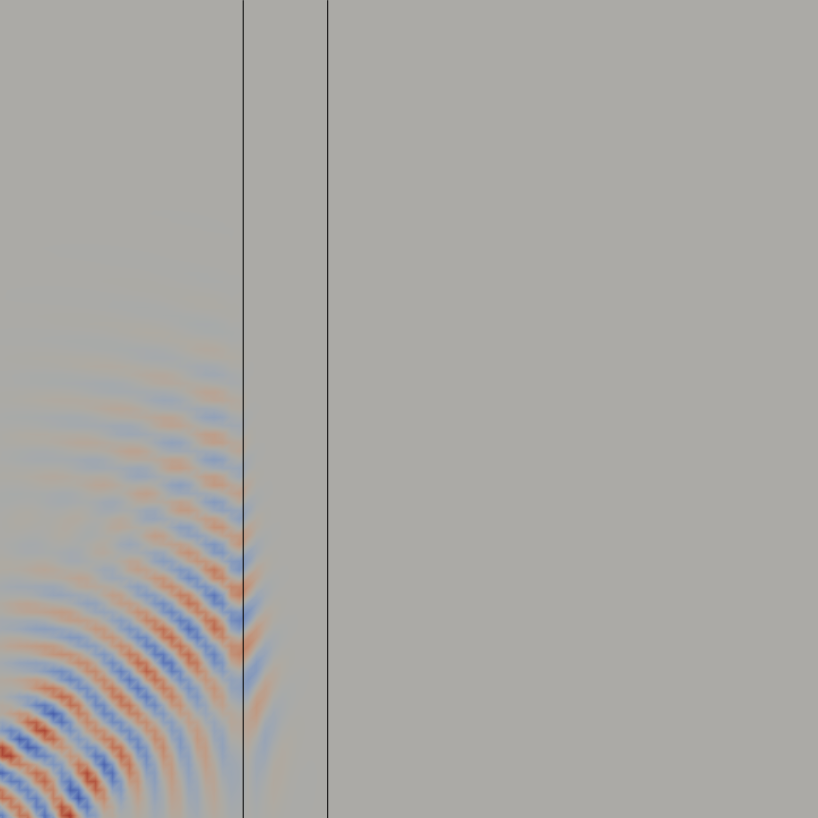} \quad 
\includegraphics[width=60mm, height=60mm]{./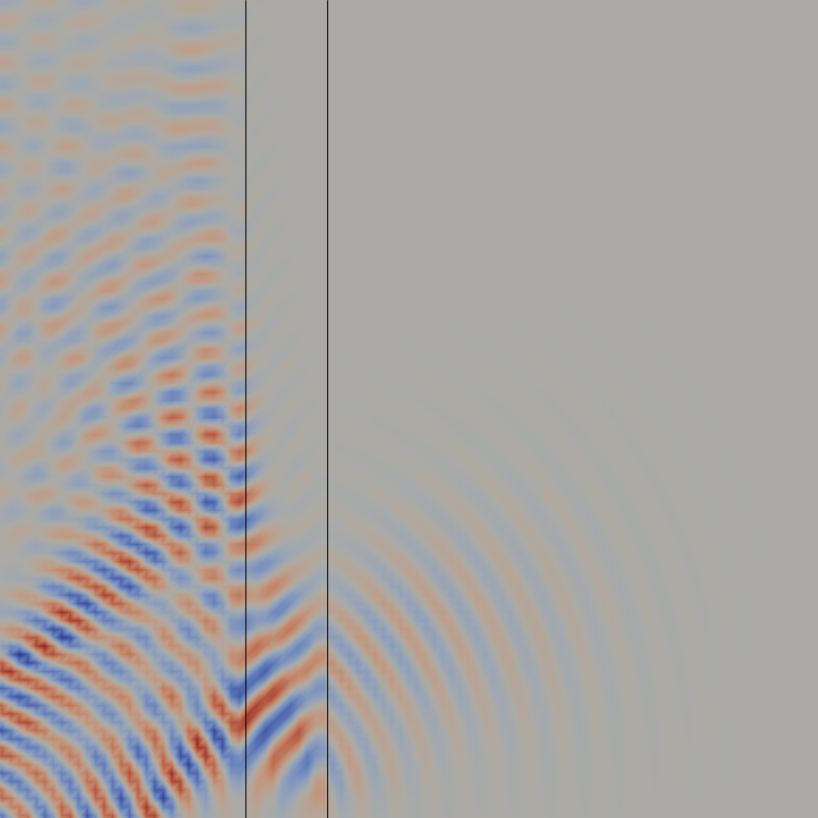} 
\caption{Wave propagation with $p_D$ in \eqref{eq:left-bottom-source}, $\mu_f = 20$, at $t = 0.2, 0.4$}
\label{fig:wave-mu-20}
\end{figure} 

In the second set of experiments we set $\Omega = [0,2]\times [0,2]$ with $\Omega_0 = [3/5,4/5]\times [0,2]$. 
We assume that the parameters are given as 
\algns{
\Omega_{\rho} = \Omega_{\kappa} = 
\begin{cases} 
80 \quad \text{ on }\Omega_0 \\
0 \quad \text{ on } \Omega \setminus \Omega_0 
\end{cases} , 
\qquad \omega_{\rho} = \omega_{\kappa} = 40 \quad \text{ on } \Omega,  
}
so the medium is a metamaterial on $\Omega_0$ but is a conventional material on $\Omega \setminus \Omega_0$. 

We remark that the choices of these parameters are made without consideration of physical ranges of parameter values. 
We also set 
\algn{ \label{eq:left-bottom-source}
p_D(t,x,y) = 
\begin{cases}
	10 \sin (\mu_f \pi (x+y-10 t) & \text{ if } t > x + y \text{ and }x < 3/5, \\
	0 & \text{ otherwise}
\end{cases} 
}
where $\mu_f$ is a constant. In the following experiments we impose the boundary condition \eqref{eq:general-bc} with $p_D$ in the above and $\Gamma_D = \pd \Omega$. Speaking more intuitively, this condition gives an incoming wave propagation from the bottom-left corner of $\Omega$ with frequency $5\mu_f$. 
\begin{figure}[h!] 
\centering
\includegraphics[width=60mm, height=60mm]{./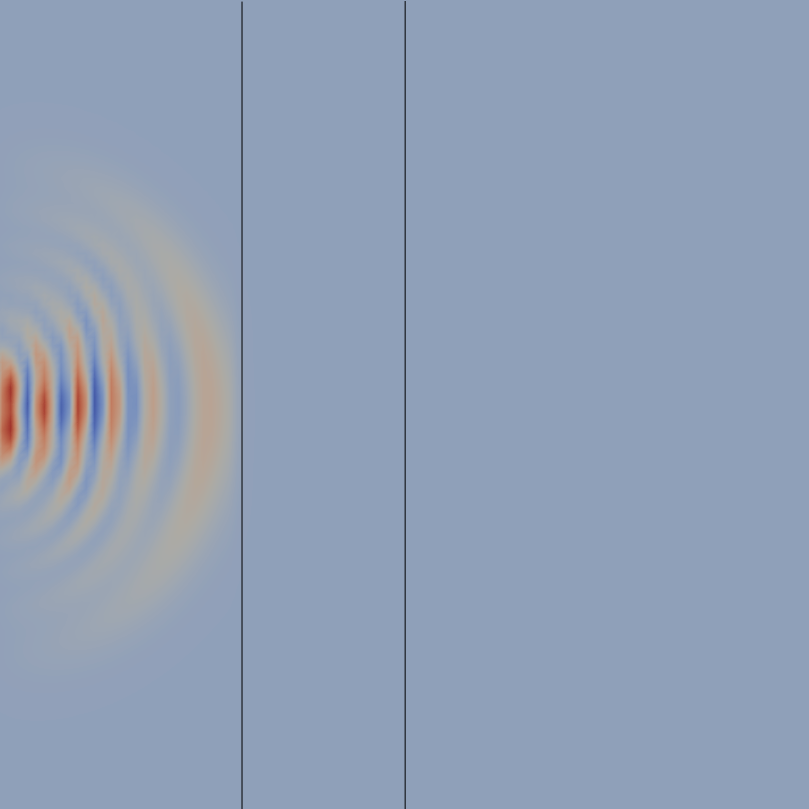} \quad 
\includegraphics[width=60mm, height=60mm]{./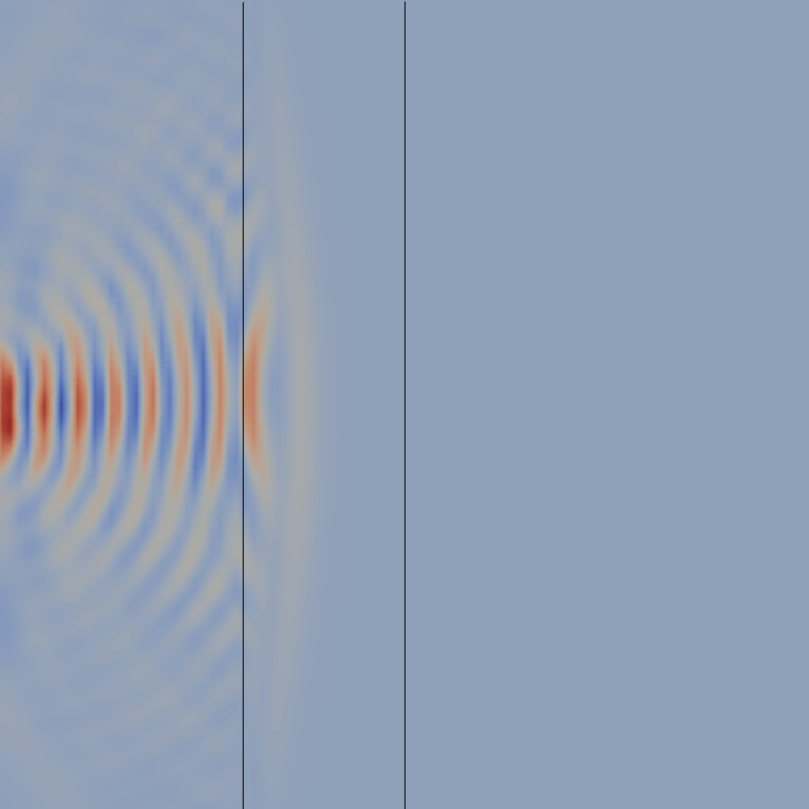} \\ \bigskip
\includegraphics[width=60mm, height=60mm]{./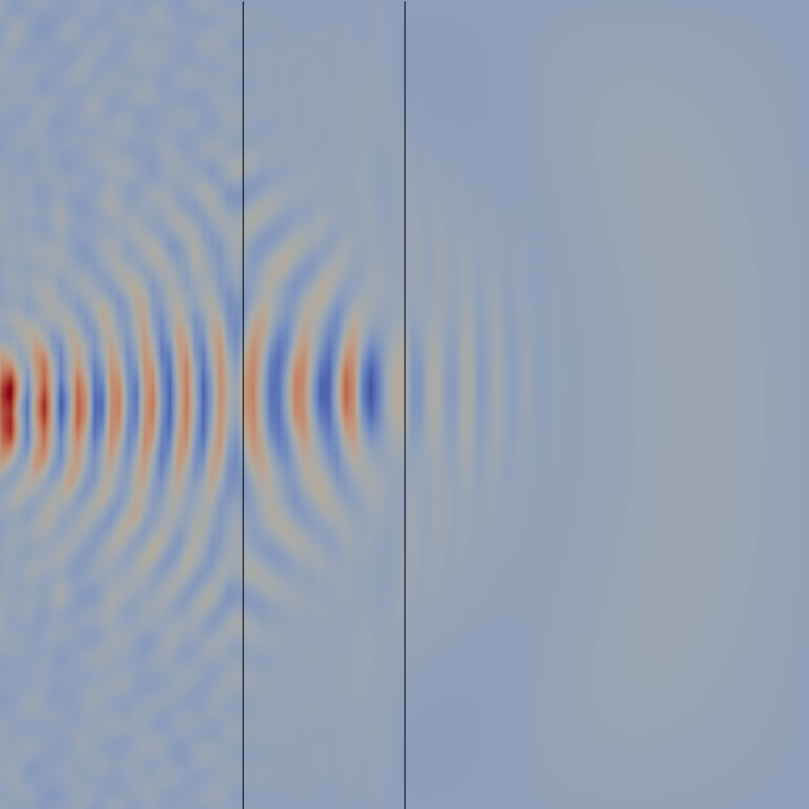} \quad 
\includegraphics[width=60mm, height=60mm]{./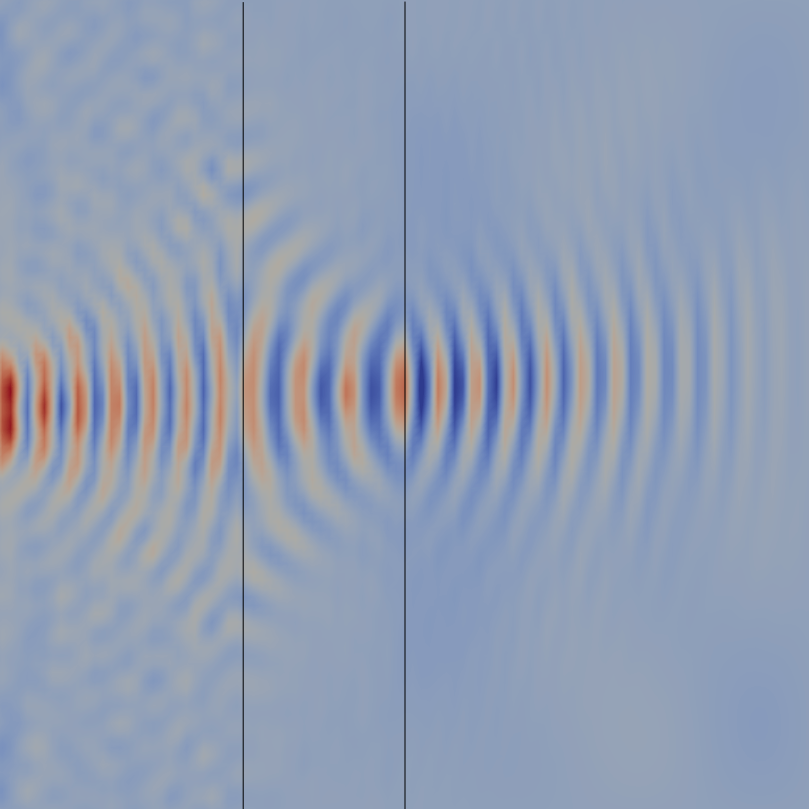} \\
\caption{Wave propagation with $p_D$ in \eqref{eq:bc-left-source} at $t = 0.06, 0.16, 0.36, 0.48$ }
\label{fig:wave-left-source}
\end{figure} 

We present the results of three experiments for $\mu_f = 18, 19, 20$. The finite elements with $\Vb_h(K) \times Q_h(K) \times \Wb_h(K) = {\rm RTN}_1 (K) \times \mc{P}_1 (K) \times \mc{P}_1(K; \R^d)$ are used for spatial discretization and $\mc{T}_h$ is the structured mesh obtained by bisecting $50 \times 50$ uniform squares of $\Omega$. The Crank--Nicolson scheme is used for time discretization with $\Delta t = 0.002$.

The wave propagation patterns are presented in Figure~\ref{fig:wave-mu-18}, Figure~\ref{fig:wave-mu-19}, and Figure~\ref{fig:wave-mu-20} for $\mu_f = 18$, $\mu_f = 19$, and $\mu_f = 20$, respectively. We call the three regions the left, the middle, and the right subdomains. In all of the figures in Figures~\ref{fig:wave-mu-18}--\ref{fig:wave-mu-20} the wave propagation patterns look standard plane waves in the lower part of the left subdomain whereas they are more complicated due to the waves reflected by the interface of the left and the middle subdomains. We can clearly see reversed wave propagation patterns on the metamaterial layer $\Omega_0$ in the three figures at $t=0.4$. In addition, wave propagation patterns on the right subdomain in the figures at $t=0.4$, are nearly plane waves with propagation directions similar to the patterns on the left subdomain. 
One can see that the details of wave propagation patterns, particularly the shapes and directions of the reversed patterns on the metamaterial layer, depend on the frequency of waves.

In the last experiment we set $\Omega = [0,2]\times [0,2]$ with $\Omega_0 = [3/5,1]\times [0,2]$, and the parameters are 
\algns{
\Omega_{\rho} = \Omega_{\kappa} = 
\begin{cases} 
80 \quad \text{ on }\Omega_0 \\
0 \quad \text{ on } \Omega \setminus \Omega_0 
\end{cases} , 
\qquad \omega_{\rho} = \omega_{\kappa} = 80 \quad \text{ on } \Omega. 
}
We also set 
\algn{ \label{eq:bc-left-source}
p_D(t,x,y) = 
\begin{cases}
	10 \exp( -(1+\sin (20 \pi (x^2+(y-1)^2- 10 t) ) )& \text{ if } y-1 < 0.1, \\
	0 & \text{ otherwise} .
\end{cases} 
}
We impose the boundary condition \eqref{eq:general-bc} with the above $p_D$ on the left-side $\{0\} \times [0,2]$ and with 0 on the other sides of $\Omega$. The finite elements are $\Vb_h(K) \times Q_h(K) \times \Wb_h(K) = {\rm RTN}_1 (K) \times \mc{P}_1 (K) \times \mc{P}_1(K; \R^d)$ and $\mc{T}_h$ is the structured mesh same as the second set of experiments. $\Delta t = 0.002$ with the Crank--Nicolson scheme. The wave propagation patterns are presented in Figure~\ref{fig:wave-left-source}. One can see that wave propagation patterns are not conventional on the metamaterial layer $\Omega_0$.

\section{Conclusion}
In this paper we developed finite element methods for acoustic wave propagation in the Drude-type metamaterials. We combined the mixed finite elements for the Poisson equations and piecewise discontinuous finite element spaces for spatial discretization. For time discretization we use the Crank--Nicolson scheme. We carried out the a priori error analysis and proposed a local post-processing scheme to overcome low approximation property of the pressure for the BDM type finite elements. The numerical experiments show the validity of our theoretical analysis as well as atypical wave propagation patterns in metamaterials.

\bibliography{reference}
\bibliographystyle{amsplain}
\vspace{.125in}

\end{document}